\newtheorem{tr}{Theorem}[section]
\newtheorem*{tr*}{Theorem}
\newtheorem{lemma}[tr]{Lemma}
\newtheorem{pr}[tr]{Proposition}
\newtheorem{cor}[tr]{Corollary}
\theoremstyle{remark}
\newtheorem{rem}[tr]{Remark}
\newlength{\myevenmargin}
\def\differential{d}
\renewcommand\d\differential
\DeclareMathOperator\im{Im}
\DeclareMathOperator\CHAR{char}
\DeclareMathOperator\Aut{Aut}
\DeclareMathOperator\Ext{Ext}
\DeclareMathOperator\Span{Span}
\DeclareMathOperator\Bl{Bl}
\DeclareMathOperator\Tsp{T}
\def\k{\Bbbk}
\renewcommand\Im\im
\def\bb#1{\mathbb #1}
\def\cal#1{\mathcal #1}
\def\ra{\rightarrow}
\def\xra{\xrightarrow}
\def\mto{\mapsto}
\def\pmat#1{\begin{pmatrix}#1\end{pmatrix}}
\def\smat#1{\left(\begin{smallmatrix}#1\end{smallmatrix}\right)}
\def\point#1{\langle #1 \rangle}
\def\refeq#1{$(\ref{#1})$}
\def\P{\bb P}
\def\O{\cal O}
\let\star *
\let\subset\subseteq
\let\supset\supseteq
\def\defeq{:=}
\def\iso{\cong}
\def\tilde{\widetilde}
\title[On moduli of $1$-dimensional sheaves on plane quartics]{A global description of the fine Simpson moduli spaces of $1$-dimensional sheaves supported on plane quartics}
\author{Oleksandr Iena}
\address{University of Luxembourg, Campus Kirchberg\\
Mathematics Research Unit\\
6, rue Richard Coudenhove-Kalergi\\
L-1359 Luxembourg City\\
Grand Duchy of Luxembourg}
\email{oleksandr.iena@uni.lu}
\date{}
\subjclass[2010]{14D20}
\keywords{Simpson moduli spaces, $1$-dimensional sheaves, blow-up, blow-down, Poincar\'e polynomial}
\begin{document}

\begin{abstract}
 A global description of the fine Simpson moduli spaces of $1$-dimensional sheaves supported on plane quartics is given: we describe the gluing of the Brill-Noether loci described by Dr\'ezet and Maican and show that the Simpson moduli space $M=M_{4m\pm 1}(\P_2)$ is a blow-down of a blow-up of a projective bundle over a smooth moduli space of Kronecker modules. An easy computation of the Poincar\'e polynomial of $M$ is presented.
\end{abstract}
\maketitle

\section*{Introduction}
 Fix an algebraically closed field $\k$, $\CHAR \k=0$. Let $V$ be a $3$-dimensional vector space over $\k$ and let $\P_2=\P V$ be the corresponding projective plane. Let $P(m)=dm+c$, $d\in \bb Z_{>0}$, $c\in \bb Z$ be a linear Hilbert polynomial. Let $M=M_{dm+c}=M_{dm+c}(\P_2)$ be the Simpson moduli  space (cf.~\cite{Simpson1}) of semi-stable sheaves on $\P_2$ with Hilbert polynomial $dm+c$.

In~\cite{MaicanDuality} and~\cite{Woolf} it has been shown that
$M_{dm+c}\iso M_{d'm+c'}$ iff $d=d'$ and $c=\pm c' \mod d$.
Therefore, in order to understand,  for fixed $d$, the Simpson moduli spaces $M_{dm+c}$ it is enough to understand at most $d/2+1$ different moduli spaces.

If $\gcd(d, c)=1$, every semi-stable sheaf is stable and $M_{dm+c}$ is a fine moduli space whose points correspond to the isomorphism classes of stable sheaves on $\P_2$ with Hilbert polynomial $dm+c$. As shown in~\cite[Proposition~3.6]{LePotier}, $M_{dm+c}$ is a smooth projective variety of dimension $d^2+1$ in this case.

As shown in~\cite[Th\'eor\`eme~5.1]{LePotier}
$M_{dm+c}\iso \P(S^d V^*)$ for $d=1$, and $d=2$.
For $d=3$, $M_{3m\pm 1}$ is isomorphic to the universal cubic plane curve
\[
\{(C, p)\in \P(S^3V^*)\times \P_2 \mid p\in C\}.
\]

By~\cite{MaicanDuality} one has the isomorphisms $M_{4m+b}\iso M_{4m-1}$ for $d=4$ and $\gcd(4, b)=1$. In~\cite{DrezetMaican4m} a description of the moduli space $M_{4m-1}$ is given in terms of two strata: an open stratum $M_0$ and its closed complement $M_1$ in codimension $2$. The open stratum is naturally described as an open subvariety if a projective bundle $\bb B\ra N$ associated to a vector bundle of rank $12$ over a smooth $6$-dimensional projective variety $N$.

\subsection*{The main result of the paper}
The aim of this paper is to give a global description of $M=M_{4m-1}$ and hence of all fine Simpson moduli spaces of $1$-dimensional sheaves supported on plane quartics. We describe how the strata from~\cite{DrezetMaican4m} are ``glued together''. 
We notice that every sheaf from $M$ can be given as the cokernel of a morphism
\[
2\O_{\P_2}(-3)\oplus 3\O_{\P_2}(-2)
\xra{}
\O_{\P_2}(-2)\oplus 3\O_{\P_2}(-1),
\]
which gives a simple way to deform the sheaves from $M_0$ to the ones from $M_1$.
We show that $M$ is a blow-down to $M_1$ of the exceptional divisor $D$ of the blow-up $\tilde{\bb B}\defeq\Bl_{\bb B\setminus M_0} \bb B$ and give a geometric ``visual'' interpretation of this statement. This provides a different and a slightly more straightforward  way (compared to~\cite{ChoiChung}) of computing the Poincar\'e polynomial of $M$.

We were kindly informed by professor Kiryong Chung that the statement of Theorem~\ref{tr: main result} coincides with the statement of~\cite[Theorem~3.1]{ChungGlobal}, which must of course be given a priority because it appeared earlier. Our methods are however significantly different.

\subsection*{Structure of the paper}
In Section~\ref{section: two strata} we review the description of the strata of $M_{4m-1}$ from~\cite{DrezetMaican4m} and give a description of the degenerations to the closed stratum.
In Section~\ref{section: B} we present a geometric description of the fibres of the bundle $\bb B\ra N$ and construct local charts around the closed subvariety $\bb B'\defeq \bb B\setminus M_0$.
This allows us to give a geometric  description of the blow up $\tilde{\bb B}=\Bl_{\bb B'}\bb B$ and to ``see'' the main result, Theorem~\ref{tr: main result}, in Section~\ref{section: main result} just by looking at the geometric data involved. The computation of the Poincar\'e polynomial of $M$ is given here as a direct corollary from~Theorem~\ref{tr: main result}.
In Section~\ref{section: proof} we rigorously prove Theorem~\ref{tr: main result} by constructing a family of $(4m-1)$-sheaves on $\tilde{\bb B}$.

\section{$M_{4m-1}$ as a union of two strata}\label{section: two strata}
As shown in~\cite{LePotier} $M=M_{4m-1}$ is a smooth projective variety of dimension $17$. By~\cite{DrezetMaican4m} $M$ is  a disjoint union of two strata $M_1$ and $M_0$  such that $M_1$ is a closed subvariety of $M$ of codimension  $2$ and $M_0$ is its open complement.

\subsection{Closed stratum.}
The closed stratum $M_1$ is a closed subvariety of $M$ of codimension  $2$ given by the condition $h^0(\cal E)\neq 0$.

The sheaves from $M_1$ possess a locally free resolution
\begin{equation}\label{eq: res1}
0\ra 2\O_{\P_2}(-3)\xra{\smat{z_1&q_1\\z_2&q_2}} \O_{\P_2}(-2)\oplus \O_{\P_2}\ra \cal E\ra 0,
\end{equation}
with  linear independent linear forms $z_1$ and $z_2$  on $\P_2$.  $M_1$ is a geometric quotient of the variety of injective matrices $\smat{z_1&q_1\\z_2&q_2}$ as above by the non-reductive group
\[
( \Aut(2\O_{\P_2}(-3))\times \Aut(\O_{\P_2}(-2)\oplus \O_{\P_2}) )/\bb C^*.
\]
The points of $M_1$ are the isomorphism classes of sheaves that are non-trivial extensions
\begin{equation}\label{eq: closed stratum}
0\ra \O_C\ra \cal E\ra \O_{ \{p\} }\ra 0,
\end{equation}
where $C$ is a plane quartic given by the determinant of $\smat{z_1&q_1\\z_2&q_2}$ from~\refeq{eq: res1} and $p\in C$ a point on it given as the common zero set of $z_1$ and $z_2$.

This describes $M_1$ as the universal plane quartic, the quotient map is given by
\[
\pmat{z_1&q_1\\z_2&q_2}\mto (C, p),\quad  C=Z(z_1q_2-z_2q_1),\quad p=Z(z_1, z_2).
\]
$M_1$ is smooth of dimension $15$.

Let $M_{11}$ be the closed subvariety of $M_1$ defined by the condition that $p$ is contained on a line $L$ contained in $C$. Equivalently, in terms of the matrices~\refeq{eq: res1} this conditions reads as vanishing of $q_1$ or $q_2$. The dimension of $M_{11}$ is $12$.

\begin{lemma}
The sheaves in $M_{11}$ are non-trivial extensions
\[
0\ra \O_L(-2)\ra \cal F\ra \O_{C'}\ra 0,
\]
where $C'$ is a cubic and $L$ is a line.
\end{lemma}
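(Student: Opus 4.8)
The plan is to extract the extension directly from the resolution~\refeq{eq: res1}, after the condition defining $M_{11}$ is used to put the matrix into triangular form. First I would recall that, by the description of $M_{11}$ given just above the lemma, a sheaf $\cal F\in M_{11}$ admits a resolution~\refeq{eq: res1} with $q_1=0$ or $q_2=0$; interchanging the two copies of $\O_{\P_2}(-3)$ — an automorphism of the source that leaves $\cal F$ unchanged — I may assume $q_1=0$, so the matrix is $\smat{z_1&0\\z_2&q_2}$. Its determinant is $z_1q_2$, so the quartic of~\refeq{eq: res1} splits as $C=L\cup C'$ with $L=Z(z_1)$ and $C'=Z(q_2)$; injectivity of the matrix forces $q_2\neq0$, and $z_1,z_2$ are independent, so $L$ is a genuine line and $C'$ a genuine cubic.

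Next I would observe that, with $q_1=0$, the morphism in~\refeq{eq: res1} is triangular with respect to the direct sum decompositions of source and target: the first $\O_{\P_2}(-3)$ maps into $\O_{\P_2}(-2)$ by $z_1$, and on the quotient by this summand the second $\O_{\P_2}(-3)$ maps into $\O_{\P_2}$ by $q_2$. This is a short exact sequence of two-term complexes (placed in degrees $-1,0$)
\[
0\to\bigl[\O_{\P_2}(-3)\xrightarrow{z_1}\O_{\P_2}(-2)\bigr]\to\bigl[2\O_{\P_2}(-3)\to\O_{\P_2}(-2)\oplus\O_{\P_2}\bigr]\to\bigl[\O_{\P_2}(-3)\xrightarrow{q_2}\O_{\P_2}\bigr]\to0,
\]
whose three differentials are all injective. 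Passing to the long exact cohomology sequence, the three $H^{-1}$ terms vanish and the $H^{0}$ terms give
\[
0\to\O_L(-2)\to\cal F\to\O_{C'}\to0,
\]
the asserted extension. (Alternatively, one can just compute: $\cal F\to\O_{C'}$ is induced by $\O_{\P_2}(-2)\oplus\O_{\P_2}\to\O_{\P_2}\twoheadrightarrow\O_{C'}$, and its kernel is $\O_{\P_2}(-2)/z_1\O_{\P_2}(-3)=\O_L(-2)$.)

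Finally I would rule out splitting by stability. Since $M_{11}\subset M=M_{4m-1}$, the sheaf $\cal F$ is stable, of slope $-1/4$. A splitting would exhibit $\O_{C'}$, which has Hilbert polynomial $3m$ and hence slope $0$, as a subsheaf of $\cal F$ — impossible for a stable sheaf of slope $-1/4$. (The same estimate, incidentally, shows that the extension must go this way round rather than the reverse.) Hence the extension is non-trivial.

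I expect the only delicate point to be the middle step: pinning down the triangular decomposition of the matrix of~\refeq{eq: res1} (hence the exact convention in which that matrix is written) and checking that the kernel of $\cal F\to\O_{C'}$ is exactly $\O_L(-2)$ and not something larger. The reduction in the first paragraph and the stability argument in the last are routine.
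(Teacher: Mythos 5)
Your proposal is correct and follows essentially the same route as the paper: reduce to the triangular matrix $\smat{l&0\\w&h}$ and read off the extension from the resulting short exact sequence of two-term complexes, which is exactly the paper's $3\times 3$ commutative diagram, with non-triviality again coming from stability. The only additions are the explicit WLOG reduction to $q_1=0$ and the slope computation ($\mu(\O_{C'})=0>-1/4=\mu(\cal F)$) making the stability argument precise, both of which are fine.
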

\begin{proof}
Consider  the isomorphism class of  $\cal F$ with resolution
\begin{equation}
0\ra 2\O_{\P_2}(-3)\xra{\smat{l&0\\w&h}} \O_{\P_2}(-2)\oplus \O_{\P_2}\ra \cal F\ra 0.
\end{equation}
This gives the commutative diagram with exact rows and columns.
\[
\begin{xy}
(-35,30)*+{0}="2-1";
(0,30)*+{0}="20";
(20,30)*+{0}="21";
(-50,15)*+{0}="1-2";
(-35,15)*+{\O_{\P_2}(-3)}="1-1";
(0,15)*+{\O_{\P_2}(-2)}="10";
(20,15)*+{\O_L(-2)}="11";
(35,15)*+{0}="12";
(-50,0)*+{0}="0-2";
(-35,0)*+{2\O_{\P_2}(-3)}="0-1";
(0,0)*+{\O_{\P_2}(-2)\oplus \O_{\P_2}}="00";
(20,0)*+{\cal F}="01";
(35,0)*+{0}="02";
(-50,-15)*+{0}="-1-2";
(-35,-15)*+{\O_{\P_2}(-3)}="-1-1";
(0,-15)*+{\O_{\P_2}}="-10";
(20,-15)*+{\O_{C'}}="-11";
(35,-15)*+{0}="-12";
(-35,-30)*+{0}="-2-1";
(0,-30)*+{0}="-20";
(20,-30)*+{0}="-21";
{\ar@{->}^-{l}"1-1";"10"};
{\ar@{->}^-{\smat{1&0}}"1-1";"0-1"};
{\ar@{->}^-{\smat{1&0}}"10";"00"};
{\ar@{->}^-{}"0-2";"0-1"};
{\ar@{->}^-{\smat{l&0\\w&h}}"0-1";"00"};
{\ar@{->}^-{}"00";"01"};
{\ar@{->}^-{}"01";"02"};
{\ar@{->}^-{\smat{0\\1}}"00";"-10"};
{\ar@{->}^-{h}"-1-1";"-10"};
{\ar@{->}^-{\smat{0\\1}}"0-1";"-1-1"};
{\ar@{->}^-{}"-10";"-11"};
{\ar@{->}^-{}"01";"-11"};
{\ar@{->}^-{}"10";"11"};
{\ar@{->}^-{}"11";"01"};
{\ar@{->}^-{}"2-1";"1-1"};
{\ar@{->}^-{}"20";"10"};
{\ar@{->}^-{}"21";"11"};
{\ar@{->}^-{}"-1-1";"-2-1"};
{\ar@{->}^-{}"-10";"-20"};
{\ar@{->}^-{}"-11";"-21"};
{\ar@{->}^-{}"1-2";"1-1"};
{\ar@{->}^-{}"-1-2";"-1-1"};
{\ar@{->}^-{}"11";"12"};
{\ar@{->}^-{}"-11";"-12"};
\end{xy}
\]
Therefore, $\cal F$ is an extension
\[
0\ra \O_L(-2)\ra \cal F\ra \O_{C'}\ra 0,
\]
which is nontrivial since $\cal F$ is stable. This proves the required statement.
\end{proof}
Let $M_{10}$ denote  the open complement of $M_{11}$ in $M_1$.

\subsection{Open stratum.}
The open stratum $M_0$ is the complement of $M_1$ given by the condition $h^0(\cal E)=0$, it
 consists of the isomorphism classes of the cokernels of the injective morphisms
\begin{equation}\label{eq: res0}
\O_{\P_2}(-3)\oplus 2\O_{\P_2}(-2)\xra{A} 3\O_{\P_2}(-1)
\end{equation}
such that the $(2\times 2)$-minors of the linear part
$\smat{z_0&z_1&z_2\\w_0&w_1&w_2}$ of
\(
A=\smat{q_0&q_1&q_2 \\z_0&z_1&z_2\\w_0&w_1&w_2}
\)
are linear independent.

\subsubsection{$M_0$ as a geometric quotient.}
$M_0$ is an open subvariety in the geometric quotient $\bb B$ of the variety $\bb W^s$ of stable matrices as in \refeq{eq: res0} (see~\cite[Proposition~7.7]{MaicanTwoSemiSt} for details) by the group
\[
\Aut(\O_{\P_2}(-3)\oplus 2\O_{\P_2}(-2)) \times\Aut(3\O_{\P_2}(-1)).
\]
Its complement in $\bb B$ is a closed subvariety $\bb B'$ corresponding to the matrices with zero determinant.

\subsubsection{Extensions.}
If the maximal minors of the linear part of $A$ have a linear common factor, say  $l$, then $\det(A)=l\cdot h$ and $\cal E_A$ is in this case a non-split extension
\begin{equation}\label{eq: extension}
0\ra \O_L(-2)\ra \cal E_A\ra \O_{C'}\ra 0,
\end{equation}
where $L=Z(l)$, $C'=Z(h)$.

The subvariety $M_{01}$ of such sheaves is closed in $M_0$ and locally closed in $M$. Its boundary coincides with $M_{11}$.

\subsubsection{Twisted ideals of $3$ points on a quartic.}
Let  $M_{00}$ denote the open complement of $M_{01}$ in $M_0$.
In this case the maximal minors of the linear part  of $A$ are coprime, and  the cokernel $\cal E_A$ of~\refeq{eq: res0} is a part of the exact sequence
\[
0\ra \cal E_A\ra \O_C(1)\ra\O_Z\ra 0,
\]
where $C$ is a planar quartic curve
given by the determinant of $A$ from~\refeq{eq: res0} and $Z$ is
the zero dimensional subscheme of length $3$ given
by the maximal minors of the linear submatrix of $A$.
Notice that in this case the subscheme $Z$ does not lie on a line.

\subsection{Degenerations to the closed stratum}
\begin{pr}
1) Every sheaf in $M_1$ is a degeneration of sheaves from $M_{00}$. This corresponds to a degeneration of $Z\subset C$, where $Z$ is a zero-dimensional scheme of length $3$ not lying on a line and $C$ is a quartic curve,  to a flag $Z\subset C$ with $Z$ contained in a line $L$ that is not included in $C$. The limit corresponds to the point in $M_1$ described by the point $(L\cap C)\setminus Z$ on the quartic curve $C$.
\par
\noindent 2) The sheaves from $M_1$ given by pairs $(C, p)$ such that $p$ belongs to a line $L$ contained in $C$ are degenerations of sheaves from $M_{01}$. This corresponds to degenerations of extensions~\refeq{eq: extension} without sections to extensions with sections.
\end{pr}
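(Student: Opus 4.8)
The plan is to prove the two degeneration statements by exhibiting explicit one-parameter families of matrices in the relevant parameter spaces and computing the flat limit of the induced sheaves. For part~1), I would start from a sheaf $\cal E_A \in M_{00}$ presented by a matrix $A$ as in~\refeq{eq: res0} whose linear part $\smat{z_0&z_1&z_2\\w_0&w_1&w_2}$ has coprime maximal minors $p_0,p_1,p_2$ (so $Z = Z(p_0,p_1,p_2)$ is three points not on a line) lying on the quartic $C = Z(\det A)$. The idea is to degenerate the three points $Z$ to a collinear configuration $Z_0 \subset L$ with $L \not\subset C$; concretely one chooses a family $A_t$ with $A_0$ the limiting matrix, for which the minors of the linear part acquire the common factor $l$ defining $L$, i.e. $p_i(t) \to l\cdot m_i$. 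I would then compute the cokernel $\cal E_{A_0}$: from the sequence $0\to\cal E_{A_t}\to\O_C(1)\to\O_{Z_t}\to 0$, the flat limit sits in $0\to\cal E_{A_0}\to\O_C(1)\to\cal Q\to 0$ where $\cal Q$ is the flat limit of the structure sheaves $\O_{Z_t}$, which is $\O_{Z_0}$ with $Z_0 = Z_L \cap C$ of length $4$ minus the "extra" point; unwinding this, one recognizes $\cal E_{A_0}$ as an extension~\refeq{eq: closed stratum} $0\to\O_C\to\cal E_{A_0}\to\O_{\{p\}}\to 0$ with $p$ the residual point of $(L\cap C)\setminus Z_0$. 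The bookkeeping of which point survives in the limit is the step that needs care: since $Z_t$ has length $3$ but $L\cap C$ has length $4$, as $Z_t\to Z_0\subset L$ one gets an embedded/residual point, and identifying it with the $p$ in the $M_1$-description via~\refeq{eq: res1} is where a short explicit computation with the $2\times 2$ matrix $\smat{z_1&q_1\\z_2&q_2}$ is required.

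For part~2), I would work inside the family of extensions~\refeq{eq: extension}, $0\to\O_L(-2)\to\cal E_A\to\O_{C'}\to 0$, parametrized by $\Ext^1(\O_{C'},\O_L(-2))$, and simply take a one-parameter family of extension classes $\xi_t$ with $\xi_t\neq 0$ for $t\neq 0$ and $\xi_0 = 0$. The limit sheaf is then the split extension $\O_L(-2)\oplus\O_{C'}$ — but one must be careful: this split object need not be the flat limit in $M$, since $M$ parametrizes stable sheaves and semistable degenerations get replaced by their S-equivalence representatives or, more to the point here, by a stable sheaf in the same equivalence class. I expect the correct statement is that the flat limit in $M$ is the sheaf from $M_1$ given by the pair $(C, p)$ where $C = L\cup C'$ and $p = L\cap C'$ (or a specified point among the intersection), matching~\refeq{eq: res1} with $q_1$ or $q_2$ equal to zero, i.e. a point of $M_{11}$. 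Concretely I would realize the degeneration at the level of the presenting matrices~\refeq{eq: res0}: write $A_t$ with linear part whose minors are $l\cdot(\text{cubic minors depending on }t)$ and adjust the quadratic row so that at $t=0$ the matrix becomes equivalent to one of the block form~\refeq{eq: res1} with a vanishing $q_i$, then read off $(C,p)$.

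The key structural tool throughout is that all sheaves in $M$ — those in $M_0$ via~\refeq{eq: res0} and those in $M_1$ via~\refeq{eq: res1} — are cokernels of maps between direct sums of line bundles, and as remarked in the introduction, they can all be uniformly presented as cokernels of $2\O_{\P_2}(-3)\oplus 3\O_{\P_2}(-2)\xra{} \O_{\P_2}(-2)\oplus 3\O_{\P_2}(-1)$; so both degenerations can in principle be written as a single path in this larger matrix space, with the two strata appearing according to the rank of a sub-block. I would use this uniform presentation to make the limits manifestly flat (the cokernel of a morphism of fixed-rank locally free sheaves over a base is flat once the morphism is fibrewise injective, which one checks by a dimension/Hilbert-polynomial count). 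The main obstacle I anticipate is not the existence of the families but the precise identification of the limit point inside $M_1$: because passing from $M_0$'s presentation to $M_1$'s presentation~\refeq{eq: res1} involves a change of the resolution type (the term $\O_{\P_2}$ appears, reflecting $h^0(\cal E)\neq 0$), one has to perform an explicit elementary-transformation computation on the limiting matrix to bring it into the form~\refeq{eq: res1} and thereby extract the pair $(C,p)$ — and to verify that the extension~\refeq{eq: closed stratum} one obtains is genuinely non-split, which follows from stability of the limit but should be checked against the explicit $z_1,z_2,q_1,q_2$ produced.
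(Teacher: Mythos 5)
Your plan for part 1) is essentially the paper's argument: the paper also reduces everything to the single computation that for $Z$ of length $3$ on a line $L=Z(l)$ with $Z\subset C=Z(f)$, writing $f=lh-wg$ (where $Z=Z(l,g)$) yields a resolution $0\ra 2\O_{\P_2}(-3)\xra{\smat{l&g\\w&h}}\O_{\P_2}(-2)\oplus\O_{\P_2}\ra \cal I_Z(1)\ra 0$, so that $\cal I_Z(1)$ is the sheaf of $M_1$ attached to $(C,p)$ with $p=Z(l,w)=(L\cap C)\setminus Z$, and is stable precisely when $l\nmid f$. Two small points you should not skip: the converse direction (if $L\subset C$ then $\cal I_L(1)$, resp.\ $\O_{C'}$, destabilizes, so the degenerating family must stay off the locus $H'$ of flags lying on a linear component), and the surjectivity onto $M_1$ (for any $(C,p)$ at most four lines through $p$ lie on $C$, so a line $L$ with $L\not\subset C$ always exists and the fibre of $H(3,4)\setminus H'\ra M$ over $(C,p)$ is a $\P_1$ minus at most four points). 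Your description of the flat limit of $\O_{Z_t}$ as ``length $4$ minus the extra point'' is garbled --- the limit of length-$3$ schemes has length $3$; the residual point $p$ appears only when you compare $Z_0$ with the length-$4$ scheme $L\cap C$.

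Part 2) contains a genuine gap. You propose to degenerate the extension class $\xi_t$ to $\xi_0=0$ and then repair the split limit by S-equivalence. This cannot work as stated: since $\gcd(4,-1)=1$, every semistable sheaf with Hilbert polynomial $4m-1$ is stable and there are no nontrivial S-equivalence classes; moreover $\O_L(-2)\oplus\O_{C'}$ is not even semistable (the summand $\O_{C'}$ has slope $0>-1/4$), so a family whose naive limit is the split extension simply does not converge to that sheaf in $M$, and identifying its actual limit would require an elementary modification --- a much harder problem than the one posed. The mechanism the statement itself indicates, and the one the paper uses, never passes through the split extension: one computes $\Ext^1(\O_{C'},\O_L(-2))\iso\k^3$ and, via Lange's universal families of extensions, obtains a $\P_2$-bundle $P$ over $\P V^*\times\P S^3V^*$ carrying a family of \emph{non-split} extensions \refeq{eq: extension}, all of which are stable, hence a morphism $P\ra M$ from a proper variety. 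Inside each fibre $\P_2=\P\Ext^1(\O_{C'},\O_L(-2))$ the extensions with $h^0\neq 0$ form a closed proper subset (a $\P_1$, matching the points $p\in L$ of the $M_{11}$-description) whose complement maps to $M_{01}$; properness and irreducibility of $P$ then give that $M_{11}$ lies in the closure of $M_{01}$. Your fallback of writing an explicit path of presenting matrices could be made to work using the uniform $5\times 4$ presentation, but that is exactly the machinery of Section~4 of the paper and is far heavier than what this Proposition requires.
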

The proof follows from the considerations below.
\subsubsection{Degenerations along $M_{00}$}
Fix a curve $C\subset \P_2$ of degree $4$, $C=Z(f)$, $f\in \Gamma(\P_2, \O_{\P_2}(4))$.
Let $Z\subset C$ be a zero-dimensional scheme of length $3$ contained in a line $L=Z(l)$, $l\in \Gamma(\P_2, \O_{\P_2}(1))$. Let $\cal F=\cal I_{Z}(1)$ be the twisted ideal sheaf of $Z$ in $C$ so that  there is an exact sequence
\[
0\ra \cal F\ra \O_C(1)\ra\O_Z\ra 0.
\]

\begin{lemma}
In the notations as above, the twisted ideal sheaf $\cal F=\cal I_Z(1)$  is semistable if and only if $L$ is not contained in $C$.
\end{lemma}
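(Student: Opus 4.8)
The plan is to analyze the semistability of $\cal F = \cal I_Z(1)$ directly from the definition, using the fact that a pure $1$-dimensional sheaf on $\P_2$ with Hilbert polynomial $4m-1$ is semistable if and only if every proper nonzero subsheaf $\cal F'$ with support of dimension $1$ has slope $p(\cal F') \le p(\cal F)$, i.e. $\chi(\cal F')/\deg(\cal F') \le \chi(\cal F)/\deg(\cal F) = -1/4$. The sheaf $\cal F$ is pure of dimension $1$ (being a twisted ideal sheaf on the pure curve $C$), with Hilbert polynomial $4m-1$, so its reduced Hilbert polynomial is $m - 1/4$.

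First I would treat the easy implication: suppose $L \subset C$. Then $C = L \cup C'$ where $C'$ is a cubic (possibly $L$ is a component with multiplicity, but generically $C = Z(l h)$ with $\deg h = 3$), and $Z \subset L$. The ideal sheaf $\cal I_Z(1)$ then contains the subsheaf $\cal I_{Z, L}(1-?)$... more precisely, restriction/inclusion gives a destabilizing subsheaf supported on $L$: one has the natural map $\O_{C'}(1-\deg\text{something})$ or rather the kernel of $\O_C \to \O_L$ twisted appropriately sits inside, yielding a subsheaf of the form $\O_{C'}(-3)$ or a twist of $\O_L$ whose slope exceeds $-1/4$. I would exhibit explicitly: when $L \subset C$, the sheaf $\cal F$ fits in $0 \to \O_{C'} \to \cal F \to \O_L(1)\otimes \cal I_{Z,L} \to 0$ (or a similar sequence obtained by pulling back the resolution), and $\O_{C'}$ has reduced Hilbert polynomial $m$ — wait, $\chi(\O_{C'}) = \binom{3-1}{2}\cdot$... for a cubic $\chi(\O_{C'}) = 0$, degree $3$, slope $0 > -1/4$ — so $\cal F$ is not semistable. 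The key point is just to identify the correct destabilizing subsheaf and compute its slope.

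For the converse, assume $L \not\subset C$ and I would show every $1$-dimensional subsheaf $\cal G \subset \cal F$ has $\chi(\cal G)/\deg \cal G \le -1/4$. Since $\cal F \subset \O_C(1)$ and $\O_C(1)$ is stable (as $C$ is integral — but wait, $C$ need not be integral; however $\cal F$ is still pure because $\cal I_Z$ is torsion-free on $C$ when $Z$ has no embedded components away from... actually $Z$ is a length-$3$ subscheme of the possibly reducible $C$). The cleanest route: any subsheaf $\cal G \subset \cal F \subset \O_C(1)$ is supported on a subcurve $C_1 \subset C$ of some degree $e$ with $1 \le e \le 4$; then $\cal G \hookrightarrow \O_{C_1}(1)$ composed appropriately, so $\chi(\cal G) \le \chi(\O_{C_1}(1)) = e + \chi(\O_{C_1})$. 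Using that $C_1$ has degree $e$ and arithmetic genus bounds, $\chi(\O_{C_1}) = e(e-3)/2 \cdot(-1)$... for a degree-$e$ plane curve $\chi(\O_{C_1}) = -\binom{e-1}{2} + e = $ hmm, $\chi(\O_{C_1}) = 1 - p_a = 1 - \binom{e-1}{2}$ for integral, but for arbitrary effective divisors $\chi(\O_{C_1}) \le \binom{e+1}{2} - \binom{e}{2}\cdot$... I'd need the correct inequality $\chi(\O_{C_1}(1)) \le e$ would suffice but that's false for $e=1$ ($\chi(\O_L(1)) = 2$). So instead I must use the extra vanishing coming from $Z$: the constraint $L \not\subset C$ forces $Z \not\subset$ any line inside $C$, hence for a line-component $L_1 \subset C$ the intersection $Z \cap L_1$ has length $\le$ something, improving the bound. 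This is the genuinely delicate case $e = 1$: a destabilizing $\cal G$ would have to be $\O_{L_1}(k)$ with $k + 1 \ge 0$, i.e. $k \ge 0$... wait slope of $\O_{L_1}(k)$ is $\chi/\deg = (k+1)/1 = k+1$; for this to be $\le -1/4$ need $k \le -2$. So actually subsheaves supported on lines are automatically fine unless they have $\chi \ge 0$. The real danger is a subsheaf supported on a cubic $C'' \subset C$ or a conic; for a conic $Q$, $\O_Q(1)$ has $\chi = 2$, degree $2$, slope $1 > -1/4$.

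The hard part will be the converse, specifically ruling out destabilizing subsheaves supported on proper subcurves of $C$ — I expect to handle it by the exact sequence $0 \to \cal F \to \O_C(1) \to \O_Z \to 0$: any subsheaf $\cal G \subset \cal F$ supported on $C_1 \subset C$ of degree $e \le 3$ gives, via $\cal G \hookrightarrow \O_C(1)$ and then projecting to $\O_{C_1}(1)$, an inclusion whose cokernel controls $\chi$; crucially $\cal G \subset \cal F$ means $\cal G$ vanishes on $Z$, so $\cal G \hookrightarrow \cal I_{Z \cap C_1, C_1}(1)$, giving $\chi(\cal G) \le e + \chi(\O_{C_1}) - \operatorname{len}(Z \cap C_1)$. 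Then I would check case by case on $e \in \{1,2,3\}$ and the length $\ell = \operatorname{len}(Z\cap C_1) \in \{0,1,2,3\}$, using that $L \not\subset C$ prevents $\ell$ from being too large relative to $e$ (if all three points of $Z$ lay on a line-component, that component would be $L$ itself since $Z$ spans $L$, contradicting $L\not\subset C$). This finite case analysis, together with $\chi(\O_{C_1}) \le 1 - \binom{e-1}{2}$ wait I need an upper bound on $\chi(\O_{C_1})$ for effective divisors — for $C_1$ an effective divisor of degree $e$ on $\P_2$, $\chi(\O_{C_1}) = \binom{e+2}{2} - \binom{e-1}{2}$... let me just say $h^0(\O_{C_1}(1)) \le \binom{e+1}{2}$ wait — I'll use directly $\chi(\cal G) \le \chi(\O_{C_1}(1))$ and compute $\chi(\O_{C_1}(1))$ from $0 \to \O_{\P_2}(1-e) \to \O_{\P_2}(1) \to \O_{C_1}(1) \to 0$ giving $\chi(\O_{C_1}(1)) = 3 - \binom{3-e}{2}$ for $e \le 3$ (interpreting the binomial as $0$ when $e \ge 3$, so $\chi = 3$), hence $\chi(\cal G) \le 3 - \binom{3-e}{2} - \ell$; requiring this to exceed $-e/4$ (the destabilizing condition) pins down the few dangerous $(e,\ell)$, each of which I can then exclude by the geometric hypothesis $L \not\subset C$. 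Routine verification of these inequalities completes the proof; I will not grind through all of them here, but the logical skeleton is: purity $\Rightarrow$ reduce to $1$-dimensional subsheaves; bound $\chi$ of a subsheaf supported on a degree-$e$ subcurve and vanishing on $Z\cap C_1$; use $L \not\subset C$ to bound $\operatorname{len}(Z \cap C_1)$; conclude.
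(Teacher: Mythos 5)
Your overall strategy --- verify the slope inequality directly by bounding $\chi$ of subsheaves supported on subcurves of $C$ --- is a legitimate route and genuinely different from the paper's, which instead builds an explicit locally free resolution $0\ra 2\O_{\P_2}(-3)\xra{\smat{l&g\\w&h}}\O_{\P_2}(-2)\oplus\O_{\P_2}\ra\cal F\ra 0$ out of the Koszul data of $Z\subset L$ and $Z\subset C$ (writing $f=lh-wg$), and then reads off the answer: if $l,w$ are independent (equivalently $l\nmid f$) this is exactly a resolution of type~\refeq{eq: res1}, so $\cal F\in M_1$ and is stable by the Dr\'ezet--Maican description; if $w=0$ one gets a destabilizing $\O_{C'}\subset\cal F$. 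Your easy direction ($L\subset C$ gives the destabilizer $\cal I_{L,C}(1)\iso\O_{C'}$ of slope $0>-1/4$) is correct and matches the paper's one-line observation.

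The converse, however, has a genuine gap in the central inequality, and the case analysis you defer as ``routine'' does not close with the bound you state. A subsheaf $\cal G\subset\O_C(1)$ whose support is a degree-$e$ divisor $C_1=Z(f_1)$ with $f=f_1f_2$ does \emph{not} inject into $\O_{C_1}(1)$ via restriction; the maximal subsheaf of $\O_C(1)$ killed by $f_1$ is the $f_1$-torsion $f_2\cdot\O_C(1)\iso\O_{C_1}(e-3)$, which is much smaller. With the correct receptacle one finds $\chi\le\chi(\O_{C_1}(e-3))=e(e-3)/2+e(3-e)/2+\dots$, concretely $-1$ for $e=1,2$ and $0$ for $e=3$, so lines and conics can never destabilize and the entire content of the lemma sits in $e=3$: there $\cal G\subset f_2\O_C(1)\cap\cal I_Z(1)$, and what gets subtracted from $0$ is not $\mathrm{len}(Z\cap C_1)$ but the length of the part of $Z$ on which $f_2$ does \emph{not} already vanish; this is $0$ precisely when $Z\subset Z(f_2)$, i.e.\ when the residual line is $L$, i.e.\ when $L\subset C$. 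Your inequality $\chi(\cal G)\le\chi(\O_{C_1}(1))-\mathrm{len}(Z\cap C_1)$ fails to deliver this: for $e=1$ with $\ell\le 2$ it allows $\chi\ge 0$, and for $e=3$ with $\ell=3$ it only gives $\chi\le 0$, while destabilization for $e\le 3$ means exactly $\chi\ge 0$ (since $\chi>-e/4$ forces $\chi\ge0$ for an integer). So the dangerous case is left open rather than excluded, and the wrong subscheme ($Z\cap C_1$ instead of the residual of $Z\cap Z(f_2)$ in $Z$) is being tracked. The approach can be repaired along the lines above, but as written the key step would fail.
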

\begin{proof}
First of all notice that if $L$ is contained in $C$, then the inclusion of subschemes $Z\subset L\subset C$ provides a destabilizing subsheaf $\cal I_L(1)\subset \cal I_Z(1)$.

Let us construct now a locally free resolution of $\cal F$. Let $g\in \Gamma(\P_2, \O_{\P_2}(3))$ such that $\O_Z$ is given by the resolution
\[
0\ra \O_{\P_2}(-4)\xra{\smat{l& g}}\O_{\P_2}(-3)\oplus \O_{\P_2}(-1)\xra{\smat{-g\\l}}\O_{\P_2}\ra \O_Z\ra 0.
\]
Since $Z=Z(l, g)$ is contained in $C=Z(f)$, one concludes that $f=lh-wg$ for some $w\in \Gamma(\P_2, \O_{\P_2}(1))$ and $h\in \Gamma(\P_2, \O_{\P_2}(3))$.
This gives the following commutative diagram with exact rows and columns.
\[
\begin{xy}
(0,0)*+{\O_{\P_2}(1)}="00";
(20,0)*+{\O_C(1)}="01";
(-30,0)*+{\O_{\P_2}(-3)}="0-1";
(-45,0)*+{0}="0-2";
(-45,15)*+{0}="1-2";
(-45,30)*+{0}="2-2";
(35,0)*+{0}="02";
(35,15)*+{0}="12";
(0,-15)*+{\O_{Z}}="-10";
(20,-15)*+{\O_{Z}}="-11";
(0,-30)*+{0}="-20";
(20,-30)*+{0}="-21";
(0,15)*+{\O_{\P_2}(-2)\oplus \O_{\P_2}}="10";
(0,30)*+{\O_{\P_2}(-3)}="20";
(0,45)*+{0}="30";
(-30,45)*+{0}="3-1";
(-30,30)*+{\O_{\P_2}(-3)}="2-1";
(-30,15)*+{2\O_{\P_2}(-3)}="1-1";
(20,15)*+{\cal F}="11";
(20,30)*+{0}="21";
(-30,-15)*+{0}="-1-1";
(35,-15)*+{0}="-12";
{\ar@{->}^-{}"3-1";"2-1"};
{\ar@{->}^-{}"-1-1";"-10"};
{\ar@{->}^-{}"0-1";"-1-1"};
{\ar@{->}^-{}"21";"11"};
{\ar@{->}^-{}"-11";"-12"};
{\ar@{->}^-{}"00";"01"};
{\ar@{->}^-{lh-wg}"0-1";"00"};
{\ar@{->}^-{}"0-2";"0-1"};
{\ar@{->}^-{}"01";"02"};
{\ar@{->}^-{}"00";"-10"};
{\ar@{->}^-{}"01";"-11"};
{\ar@{=}^-{}"-10";"-11"};
{\ar@{->}^-{}"-10";"-20"};
{\ar@{->}^-{}"-11";"-21"};
{\ar@{->}^-{\smat{-g\\l}}"10";"00"};
{\ar@{->}^-{\smat{l&g}}"20";"10"};
{\ar@{->}^-{}"30";"20"};
{\ar@{=}^-{}"2-1";"20"};
{\ar@{->}^-{\smat{1&0}}"2-1";"1-1"};
{\ar@{->}^-{\smat{0\\1}}"1-1";"0-1"};
{\ar@{->}^-{\smat{l&g\\w&h}}"1-1";"10"};
{\ar@{->}^-{}"10";"11"};
{\ar@{->}^-{}"11";"01"};
{\ar@{->}^-{}"2-2";"2-1"};
{\ar@{->}^-{}"20";"21"};
{\ar@{->}^-{}"1-2";"1-1"};
{\ar@{->}^-{}"11";"12"};
\end{xy}
\]
Therefore, $\cal F$ possesses a locally free resolution
\[
0\ra 2\O_{\P_2}(-3)\xra{\smat{l&g\\w&h}} \O_{\P_2}(-2)\oplus \O_{\P_2}\ra \cal F\ra 0.
\]
In particular, if $l$ and $w$ are linear independent, which is true if and only if $f$ is not divisible by $l$, this is a resolution of type~\refeq{eq: res1}, hence $\cal F$ is a sheaf from $M_1$.

If $l$ and $w$ are linear dependent, then without loss of generality we can assume that $w=0$, which gives an extension
\[
0\ra \O_{C'}\ra \cal F\ra \O_L(-2)\ra 0, \quad C'=Z(h),
\]
and thus a destabilizing subsheaf $\O_{C'}$ of $\cal F$.
This concludes the proof.
\end{proof}

Let $H(3, 4)$ be the flag Hilbert scheme of zero-dimensional schemes of length $3$ on plane projective curves $C\subset \P_2$ of degree $4$.
Let $H'\subset H(3, 4)$ be the subscheme of those flags $Z\subset C$ such that $Z$ lies on a linear component of $C$. Using the universal family on $H(3, 4)$, one obtains a natural morphism
\[
H(3, 4)\setminus H'\ra M,
\]
whose image coincides with $M\setminus M_{01}$.

Its restriction to
the open subvariety $H_0(3, 4)$ of $H(3, 4)$ of flags
$Z\subset C\subset \P_2$ such that $Z$ does not lie on a line
gives an isomorphism
\[
H_0(3, 4)\ra M_{00}.
\]
Over $M_1$ one gets one-dimensional fibres: over an isomorphism class  in $M_1$, which is uniquely defined by a point $p\in C$ on a curve of degree $4$, the fibre can be identified with the variety of lines through $p$ that are not contained in $C$, i.~e., with a projective line without up to $4$ points.

\begin{rem}
Notice that the subvariety $H'$ is a $\P_3$-bundle over $\P V^*\times \P S^3V^*$, the fibre over the pair $(L, C')$ of a line $L$ and a cubic curve $C'$ is the Hilbert scheme $L^{[3]}$.

As shown in~\cite[Theorem~3.3  and Proposition~4.4]{ChoiChung}, the blow-up of $H(3,4)$ along $H'$ can be blown down along the fibres $L^{[3]}$  to the blow-up $\tilde M\defeq \Bl_{M_1} M$.
\end{rem}

\subsubsection{Degenerations along $M_{01}$}

For a fixed line $L$ and a fixed cubic curve $C'$ one can compute
$\Ext^1(\O_{C'}, \O_{L}(-2))\iso \k^3$. Therefore, using~\cite{LangeExt} one gets a projective bundle $P$ over $\P V^*\times \P S^3 V^*$ with fibre $\P_2$ and a universal family of extensions on it parameterizing the extensions
\[
0\ra \O_L(-2)\ra \cal F\ra \O_{C'}\ra 0, \quad L\in \P V^*, C'\in \P S^3V^*.
\]
This provides a morphism $P\ra M$ and describes the degenerations of sheaves from $M_{01}$ to sheaves in $M_{11}$.

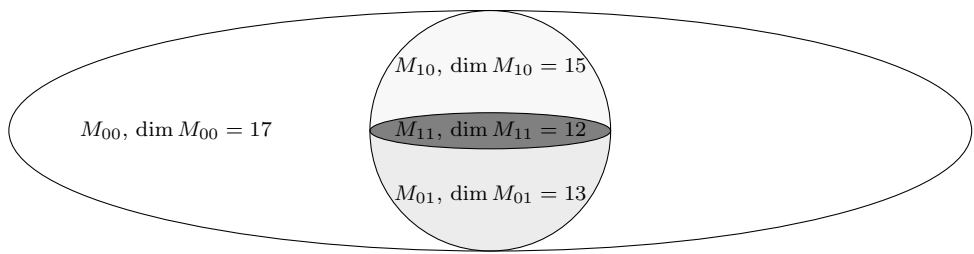
\begin{figure}[H]
\begin{center}
\begin{tikzpicture}[scale=0.8]
\def\firstellipse{(0,0) ellipse (8 and 2)}
\def\secondellipse{(0,0) ellipse (2 and 2)}
\def\thirdellipse{(0,0) ellipse (2 and 0.3)}
\begin{scope}
\clip (-2,0) rectangle (3,-5);
\fill[darkgray, fill opacity=0.1]\secondellipse;
\end{scope}
\begin{scope}
\clip (-2,0) rectangle (3,5);
\fill[lightgray, fill opacity=0.1]\secondellipse;
\end{scope}
\fill[gray] \thirdellipse;
\node[anchor=west] at (-7, 0) {\tiny $M_{00}$, $\dim M_{00}=17$};
\node[anchor=north] at (0, 1.4) {\tiny $M_{10}$, $\dim M_{10}=15$};
\node[anchor=south] at (0, -1.4) {\tiny $M_{01}$, $\dim M_{01}=13$};
\node at (0, 0) {\tiny $M_{11}$, $\dim M_{11}=12$};
\path[draw] \firstellipse;
\path[draw] \secondellipse;
\path[draw] \thirdellipse;
\end{tikzpicture}
\end{center}
\caption{Moduli space $M=M_{4m-1}(\P_2)$.}
\end{figure}

\section{Description of $\bb B$.}\label{section: B}
$\bb B$ is a projective bundle associated to a vector bundle of rank $12$ over the moduli space $N=N(3; 2, 3)$ of stable $(2\times 3)$ Kronecker modules, i.~e., over the GIT-quotient of the space $\bb V^s$ of stable $(2\times 3)$-matrices of linear forms on $\P_2$ by $\Aut(2\O_{\P_2}(-2)) \times\Aut(3\O_{\P_2}(-1))$.

The projection $\bb B\ra N$ is induced by
\[
\smat{q_0&q_1&q_2 \\z_0&z_1&z_2\\w_0&w_1&w_2}\mto \smat{z_0&z_1&z_2\\w_0&w_1&w_2}.
\]
For more details see~\cite[Proposition~7.7]{MaicanTwoSemiSt}.

\subsection{The base $N$.}

The subvariety $N'\subset N$ corresponding to the matrices whose minors have a common linear factor is isomorphic to $\P_2^*=\P V^*$, the space of lines in $\P_2$,  such that a line corresponds to the common linear factor of the minors of the corresponding Kronecker module $\smat{z_0&z_1&z_2\\w_0&w_1&w_2}$.

The blow up of $N$ along $N'$ is isomorphic to the Hilbert scheme $H=\P_2^{[3]}$ of $3$ points in $\P_2$ (cf.~\cite{EllStro}). The exceptional divisor $H'\subset H$ is a $\P_3$-bundle over $N'$, whose fibre over $\point{l}\in \P_2^*$ is the Hilbert scheme $L^{[3]}$ of $3$ points on $L=Z(l)$. The class in $N$ of a Kronecker module $\smat{z_0&z_1&z_2\\w_0&w_1&w_2}$ with coprime minors corresponds to the subscheme of $3$ non-collinear points in $\P_2$ defined by the minors of the matrix.

\subsection{The fibres of $\bb B\ra N$.} A fibre over a point from  $N\setminus N'$ can be seen as the space of plane quartics through the corresponding subscheme of $3$ non-collinear points. Indeed, consider a point from $N\setminus N'$ given by a Kronecker module $\smat{z_0&z_1&z_2\\w_0&w_1&w_2}$ with coprime minors $d_0, d_1, d_2$. The fibre over such a point consists of the orbits of injective matrices
\[
\pmat{q_0&q_1&q_2 \\z_0&z_1&z_2\\w_0&w_1&w_2}, \quad q_0, q_1, q_2\in S^2V^*,
\]
under the group action of
\[
\Aut(\O_{\P_2}(-3)\oplus 2\O_{\P_2}(-2)) \times\Aut(3\O_{\P_2}(-1)).
\]
If two matrices
\[
\smat{q_0&q_1&q_2 \\z_0&z_1&z_2\\w_0&w_1&w_2}, \quad \smat{Q_0&Q_1&Q_2 \\z_0&z_1&z_2\\w_0&w_1&w_2}
\]
lie in the same orbit of the group action, then their determinants are equal up to a multiplication by a non-zero constant. Vice versa, if the determinants of two such matrices are equal, $q-Q=(q_0-Q_0, q_1-Q_1, q_2-Q_2)$ lies in the syzygy module of  $\smat{d_0\\d_1\\ d_2}$, which is generated by the rows of $\smat{z_0&z_1&z_2\\w_0&w_1&w_2}$ by Hilbert-Burch theorem. This implies that $q-Q$ is a combination of the rows and thus the matrices lie on the same orbit.

A fibre over $\point{l}\in N'$ can be seen as the join $J(L^*, \P S^3 V^*)\iso \P_{11}$ of $L^*\iso \P H^0(L, \O_L(1))\iso \P_1$ and the space of plane cubic curves $\P(S^3 V^*)\iso \P_9$. To see this assume $l=x_0$, i.~e., $\point{x_0}$ is considered as the class of
\[
\pmat{-x_2&0&x_0\\x_1&-x_0&0}.
\]
Then the fibre over $[\smat{-x_2&0&x_0\\x_1&-x_0&0}]$ is given by the orbits of matrices
\begin{equation}\label{eq: fibre over N'}
\pmat{q_0(x_0, x_1, x_2)&q_1(x_1, x_2)&q_2(x_1, x_2)\\-x_2&0&x_0\\x_1&-x_0&0}
\end{equation}
and can be identified with the projective space
$\P(2H^0(L, \O_{L}(2))\oplus S^2V^*)$.
Rewrite the matrix~\refeq{eq: fibre over N'} as
\[
\pmat{q_0(x_0, x_1, x_2)&q_1(x_1,x_2) -x_2\cdot w&q_2(x_2)+x_1\cdot w\\-x_2&0&x_0\\x_1&-x_0&0}, \quad w(x_1, x_2)=\gamma x_1+\delta x_2, \quad \gamma, \delta\in \k.
\]
Its determinant equals
\[
x_0\cdot(x_0\cdot q_0(x_0, x_1, x_2)+x_1\cdot q_1(x_1, x_2)+ x_2\cdot q_2(x_2)).
\]
This allows to reinterpret the fibre as the projective space
\[
\P( H^0(L, \O_{L}(1))\oplus S^3V^* )\iso J(L^*, \P S^3V^*).
\]
$J(L^*, \P(S^3 V^*))\setminus L^*$ is a rank $2$ vector bundle over $\P(S^3 V^*)$, whose fibre over a cubic curve $C'\in \P S^3 V^*$
 is identified with the isomorphism classes of sheaves defined by~\refeq{eq: extension} from $M_{01}$ with fixed $L$ and $C'$, it
corresponds to the projective plane joining $C'$ with $L^*$ inside the join $J(L^*, \P(S^3 V^*))$. In the notations of the example above, $\gamma$ and $\delta$ are the coordinates of this plane.
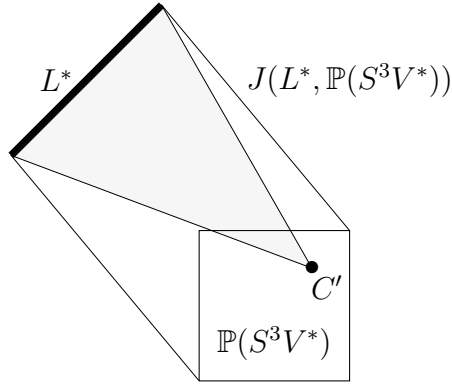
\begin{figure}[H]
\begin{center}
\begin{tikzpicture}
\draw[line width=1mm] ++(0, 0) to ++(2, 2);
\path node at (0.6,1) {$L^*$};
\path node at (4.5,1) {$J(L^*, \P(S^3V^*))$};
\draw[very thin] ++ (0, 0) to ++(2.5cm, -3cm);
\draw[very thin] ++(2, 2) to (4.5cm, -1cm);
\draw[very thin, fill=gray, fill opacity=0.07] (4, -1.5) to (0, 0) to (2, 2) to (4, -1.5);
\path node at (4, -1.5) {\textbullet};
\path node at (4.2, -1.75) {$C'$};
\begin{scope}[shift={(2.5,-3)}]
\draw ++(0, 0) to ++(2, 0) to ++(0, 2) to ++(-2, 0) to ++(0,-2);
\path node at (1,0.5) {$\P(S^3V^*)$};
\end{scope}
\end{tikzpicture}
\end{center}
\caption{The fibre of $\bb B$ over $L\in N'$.}
\end{figure}
The points of $J(L^*, \P(S^3 V^*))\setminus L^*$ parameterize the extensions~\refeq{eq: extension} from $M_{01}$ with fixed $L$.

\subsection{Description of $\bb B'$.}
$\bb B'$ is a union of lines $L^*$ from each fibre over $N'$ (as explained above).  It is isomorphic to the tautological $\P_1$-bundle over $N'=\P_2^*$
\begin{equation}\label{eq: B'}
\{(L, p)\in \P_2^*\times \P_2 \mid L\in \P_2^*, p\in L\}.
\end{equation}
Equivalently (cf.~\cite[p.~36]{DrezetMaican4m}), $\bb B'$ is isomorphic to the projective bundle associated to the tangent bundle $\Tsp \P_2^*$.
The fibre $\P_1$ of $\bb B'$ over, say, line $L=Z(x_0)\subset \P_2$ can be identified with the space of classes of matrices~\refeq{eq: res0} with zero determinant
\begin{equation}\label{eq: zero det}
\pmat{0&-x_2\cdot w&x_1\cdot w\\-x_2&0&x_0\\x_1&-x_0&0},\quad  w=\gamma x_1+\delta x_2,\quad \point{\gamma, \delta}\in \P_1.
\end{equation}

\subsection{Local charts around $\bb B'$.}

\begin{lemma}
Let $L\in N'$ be the class of the Kronecker module
\[
\pmat{-x_2&0&x_0\\x_1&-x_0&0}.
\]
Then there is an open neighbourhood of $L$ that can be identified with an open neighbourhood $U$
of zero in the affine space $\k^6$ via
\[
\k^6\supset U\ra N,\quad (\alpha,\beta, a,b,c,d)\mto \left[\pmat{-x_2&cx_1&\bar x_0\\x_1&-\bar x_0 + ax_1+bx_2 &dx_2}\right],\quad \bar x_0=x_0+\alpha x_1+\beta x_2,
\]
which establishes a local section of the quotient $\bb V^s\ra N$.
\end{lemma}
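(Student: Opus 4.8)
The plan is to reduce the statement to a single differential computation. Write
\[
M(\alpha,\beta,a,b,c,d)\defeq\smat{-x_2&cx_1&\bar x_0\\x_1&-\bar x_0+ax_1+bx_2&dx_2},\qquad\bar x_0=x_0+\alpha x_1+\beta x_2 ,
\]
so that $M$ is a morphism from $\k^6$ to the $18$-dimensional affine space $\bb V$ of all $(2\times3)$-matrices of linear forms on $\P_2$, with $M(0)=\smat{-x_2&0&x_0\\x_1&-x_0&0}$ the stable Kronecker module representing $L$. Since stability is an open condition on $\bb V$, there is an open neighbourhood $U\ni 0$ in $\k^6$ with $M(U)\subset\bb V^s$; composing with the quotient $\pi\colon\bb V^s\ra N$ gives $\phi\defeq\pi\circ M\colon U\ra N$, $\phi(0)=L$. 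It suffices to show that $\phi$ is an open immersion near $0$: then, shrinking $U$ if necessary, $\phi^{-1}$ is the claimed identification of the open neighbourhood $\phi(U)$ of $L$ with $U\subset\k^6$, and $s\defeq M\circ\phi^{-1}\colon\phi(U)\ra\bb V^s$ is a morphism with $\pi\circ s=\id_{\phi(U)}$, i.e.\ the desired local section.

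To prove this I would argue as follows. The group $G\defeq\Aut(2\O_{\P_2}(-2))\times\Aut(3\O_{\P_2}(-1))\iso\GL_2\times\GL_3$ acts on $\bb V$ through the quotient $\bar G\defeq G/\k^{*}$ by the diagonal central $\k^{*}$, and $\bar G$ acts freely on $\bb V^s$ since a stable Kronecker module has only scalar endomorphisms. Hence $\pi$ is smooth, $N$ is smooth of dimension $6$ (as recalled above), and $d\pi_{M(0)}\colon\bb V\ra T_LN$ is surjective with kernel the tangent space to the orbit $\bar G\cdot M(0)$, that is, the $12$-dimensional subspace
\[
\frak O\defeq\{\,\eta\,M(0)+M(0)\,\xi\ \mid\ \eta\in\frak{gl}_2,\ \xi\in\frak{gl}_3\,\}\subset\bb V .
\]
Everything then reduces to the transversality claim $dM_0(\k^6)\cap\frak O=0$. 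Granting it, $d\phi_0=d\pi_{M(0)}\circ dM_0$ is injective, hence an isomorphism since $\dim\k^6=\dim T_LN=6$, so $\phi$ is étale at $0$; and the same transversality makes the differential of the auxiliary map $\bar G\times U\ra\bb V^s$, $(\bar g,u)\mapsto\bar g\cdot M(u)$, at $(\bar 1,0)$ bijective, so this map is a local isomorphism there, which forces two nearby $M(u),M(u')$ lying in the same $\bar G$-orbit to have $u=u'$; thus $\phi$ is injective near $0$. An étale, locally injective morphism is an open immersion, so $\phi$ is an open immersion near $0$, as required.

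It remains to verify the transversality claim, which is the one genuine computation. The partial derivatives of $M$ at the origin are
\[
\partial_\alpha M|_0=\smat{0&0&x_1\\0&-x_1&0},\ \ \partial_\beta M|_0=\smat{0&0&x_2\\0&-x_2&0},\ \ \partial_a M|_0=\smat{0&0&0\\0&x_1&0},\ \ \partial_b M|_0=\smat{0&0&0\\0&x_2&0},\ \ \partial_c M|_0=\smat{0&x_1&0\\0&0&0},\ \ \partial_d M|_0=\smat{0&0&0\\0&0&x_2} .
\]
Assuming $\sum_\bullet t_\bullet\,\partial_\bullet M|_0=\eta\,M(0)+M(0)\,\xi$ for scalars $t_\bullet$ and $\eta\in\frak{gl}_2$, $\xi\in\frak{gl}_3$, I would compare the coefficients of $x_0,x_1,x_2$ entrywise: the $(1,1)$- and $(2,1)$-entries force $\eta$ to be a scalar matrix and determine the first column of $\xi$; the $(1,2)$- and $(2,2)$-entries give $t_c=0$, $t_a=t_\alpha$, $t_b=t_\beta$ and $\xi_{12}=0$; the $(1,3)$-entry gives $t_\alpha=0$ (hence $t_a=0$) and $t_\beta=\pm\xi_{13}$; and the $(2,3)$-entry gives $t_d=0$ and $\xi_{13}=0$, whence $t_\beta=0$ and $t_b=0$. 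Thus all $t_\bullet$ vanish. I expect this last bit of bookkeeping to be the only mildly delicate point in the whole argument — in particular, that $t_\beta$ and $t_b$ survive until the final step and are eliminated only once $\eta$ and the relevant entries of $\xi$ have been pinned down; everything else is formal.
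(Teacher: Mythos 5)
Your argument takes a genuinely different route from the paper's, and most of it is sound. The paper's proof is essentially a two-line affair: stability is an open condition, and two Kronecker modules in the stated normal form lie in the same orbit only if they are equal --- a \emph{global} injectivity assertion (itself left unverified in detail), from which the open-immersion conclusion follows implicitly via equal dimensions and Zariski's main theorem. You instead argue infinitesimally. Your transversality computation $dM_0(\k^6)\cap\frak O=0$ is correct --- I checked the entrywise bookkeeping, and the chain ($\eta$ scalar, $t_c=0$, $t_a=t_\alpha$, $t_b=t_\beta$, $t_\alpha=0$, $t_\beta=-\xi_{13}$, $t_d=0$, $\xi_{13}=0$) does close up as you describe --- and it establishes something the paper never makes explicit, namely that the normal form is transverse to the orbits, so that $\phi$ is \'etale at $0$.

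The gap is in your injectivity step. That $\Psi\colon\bar G\times U\ra\bb V^s$, $(\bar g,u)\mto\bar g\cdot M(u)$, is a local isomorphism \emph{at} $(\bar 1,0)$ does not force two nearby $M(u)$, $M(u')$ in the same orbit to coincide: ``in the same orbit'' means $M(u)=\bar g\cdot M(u')$ for \emph{some} $\bar g\in\bar G$, and your local statement only excludes identifications by group elements close to the identity. This is the classical local-slice versus global-slice issue --- for a non-proper action a slice transverse to the orbits need not inject into the quotient --- and \'etaleness of $\phi$ at $0$ by itself yields injectivity only on an analytic or \'etale neighbourhood, not on a Zariski one, so the point cannot be waved away. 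What saves the statement here is that the free $\bar G$-action on $\bb V^s$ is proper, equivalently $\bb V^s\ra N$ is a $\bar G$-torsor: the group element relating $M(u)$ and $M(u')$ then depends algebraically on the pair $(u,u')\in U\times_N U$ and, as $(u,u')$ specializes to $(0,0)$, must specialize into the stabilizer of $M(0)$, which is trivial in $\bar G$, putting you back in the range of your local isomorphism. You should either supply this torsor/properness argument or do what the paper does and verify directly that two matrices of the given normal form lying in the same orbit are equal, which yields injectivity on all of $U$ in one stroke.
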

\begin{proof}
In some open neighbourhood $U$ of zero in $\k^6$ the morphism
\[
U\ra \bb V^s, \quad    (\alpha,\beta, a,b,c,d)\mto \pmat{-x_2&cx_1&\bar x_0\\x_1&-\bar x_0 + ax_1+bx_2 &dx_2}
\]
is well-defined.
Notice that two Kronecker modules of the form
\[
\pmat{-x_2&cx_1&\bar x_0\\x_1&-\bar x_0 + ax_1+bx_2 &dx_2}
\] can lie in the same orbit of the group action if and only if the matrices are equal.
Therefore, the morphism
\[
   (\alpha,\beta, a,b,c,d)\mto \left[\pmat{-x_2&cx_1&\bar x_0\\x_1&-\bar x_0 + ax_1+bx_2 &dx_2}\right]
\]
is injective.
\end{proof}
\begin{rem}
By abuse of notation we identify $U$ with its image in $N$.
\end{rem}
\begin{lemma}
$N'$ is cut out in $U$ by the equations $a=b=c=d=0$.
\end{lemma}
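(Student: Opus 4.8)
The plan is to make the condition ``the minors of the Kronecker module have a common linear factor'' completely explicit on the chart $U$ and read off the answer. Write $M=M(\alpha,\beta,a,b,c,d)=\pmat{-x_2&cx_1&\bar x_0\\x_1&-\bar x_0+ax_1+bx_2&dx_2}$ with $\bar x_0=x_0+\alpha x_1+\beta x_2$, and let $\delta_0,\delta_1,\delta_2$ be its $2\times2$ minors, regarded as polynomials in $x_0$ with coefficients in $\k[x_1,x_2]$. A short computation gives
\[
\delta_1=-x_1\,x_0-(\alpha x_1^2+\beta x_1x_2+dx_2^2),\qquad
\delta_2=x_2\,x_0-\bigl((a-\alpha)x_1x_2+(b-\beta)x_2^2+cx_1^2\bigr),
\]
each of degree $1$ in $x_0$ with leading coefficient $-x_1$, resp.\ $x_2$, while $\delta_0$ has degree $2$ in $x_0$ with leading coefficient $1$. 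By the definition of $N'$ one has $M\in N'$ iff $\delta_0,\delta_1,\delta_2$ admit a common linear factor, so it suffices to show this holds exactly when $a=b=c=d=0$.

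For the implication $a=b=c=d=0\Rightarrow M\in N'$ one simply substitutes and observes that the minors collapse to $\bar x_0^2$, $-\bar x_0 x_1$, $\bar x_0 x_2$, which share the factor $\bar x_0$; in fact $M$ then maps to the line $Z(\bar x_0)$ under the identification $N'\iso\P V^*$. For the converse, suppose $l$ is a (homogeneous) linear form dividing all three minors. I would first dispose of the case in which $l$ does not involve $x_0$: then $l\in\k[x_1,x_2]$ divides $\delta_1$ and $\delta_2$, and comparing the coefficients of $x_0$ on both sides shows $l\mid x_1$ and $l\mid x_2$, which is impossible for a nonzero linear form. Hence, up to a scalar, $l=x_0+(\text{a linear form in }x_1,x_2)$.

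Now, comparing $x_0$-degrees, $\delta_1=l\cdot g$ with $g\in\k[x_1,x_2]$ homogeneous of degree $1$; matching leading coefficients in $x_0$ forces $g=-x_1$, so $l=-\delta_1/x_1$, and the requirement that this quotient be a polynomial forces $d=0$ and $l=\bar x_0$. Repeating the argument for $\delta_2$ with $l=\bar x_0$ gives $\delta_2=\bar x_0\cdot x_2$, and equating the coefficients of $x_1x_2$, $x_2^2$ and $x_1^2$ yields $a=b=c=0$; the minor $\delta_0$ is then automatically divisible by $\bar x_0$ and plays no role. This proves the set-theoretic equality $N'\cap U=\{a=b=c=d=0\}$, and since the right-hand side is the reduced complete intersection $V(a,b,c,d)$ and $N'$ carries the reduced structure, the equality holds scheme-theoretically as well.

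The computation is routine throughout; the one place that calls for a little attention is the case distinction on whether the putative common factor $l$ involves $x_0$ — organizing everything by degree in $x_0$ is what keeps both cases to a single line. (One can even bypass the converse computation altogether: the first implication already exhibits $\{a=b=c=d=0\}$ as an irreducible closed surface inside $U$ contained in $N'\cap U$, which is a dense open subset of the irreducible surface $N'\iso\P V^*$, so the two must coincide.)
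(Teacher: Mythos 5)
Your proof is correct and follows essentially the same route as the paper: compute the three maximal minors explicitly on the chart $U$ and check directly that a common linear factor exists precisely when $a=b=c=d=0$. In fact your write-up is more complete than the paper's, which merely asserts the necessity of $c=d=0$ (via reducibility of the conics) and then of $a=b=0$, whereas your degree-in-$x_0$ bookkeeping pins down the common factor as $\bar x_0$ and derives all four equations cleanly.
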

\begin{proof}
The maximal minors of $\smat{-x_2&cx_1&\bar x_0\\x_1&-\bar x_0 + ax_1+bx_2 &dx_2}$ are
\[
cdx_1x_2+\bar x_0(\bar x_0-ax_1-bx_2),\quad -dx_2^2-\bar x_0x_1,\quad x_2(\bar x_0-ax_1-bx_2)-cx_1^2.
\]
Clearly these minors have a common linear factor if $a$, $b$, $c$, $d$ vanish. On the other hand the condition $c=d=0$ is necessary to ensure the reducibility of these quadratic forms. If $c=d=0$, the conditions $a=b=0$ are necessary for the minors to have a common factor.
\end{proof}

\begin{lemma}
The restriction of $\bb B$ to $U$ is a trivial $\P_{11}$-bundle. Identifying $\P_{11}$ with the projective space
\[
\P(S^2V^* \oplus 2 \Span(x_1^2, x_1x_2, x_2^2)),
\]
i.~e., a point in $\P_{11}$ is identified with the class of the triple of quadratic forms
\[
(q_0(x_0, x_1, x_2), q_1(x_1, x_2), q_2(x_1, x_2)),
\]
one can identify $U\times \P_{11}$, and hence $\bb B|_{U}$, with the classes of matrices
\begin{equation}\label{eq: standard A}
\pmat{q_0(x_0, x_1, x_2)&q_1(x_1, x_2)&q_2(x_1, x_2)\\-x_2&cx_1&\bar x_0\\x_1&-\bar x_0 + ax_1+bx_2 &dx_2}.
\end{equation}
Assuming one of the coefficients of $q_0$, $q_1$, $q_2$ equal to $1$, we get local charts of the form $U\times \k^{11}$ and local sections of the quotient $\bb W^s\ra\bb B$.
\end{lemma}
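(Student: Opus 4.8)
The plan is to prove the three assertions in turn, all of them essentially bookkeeping on top of the two preceding lemmas. First I would establish triviality of $\bb B|_U$. Since $\bb B\ra N$ is the projective bundle $\P(\cal V)$ associated to a rank-$12$ vector bundle $\cal V$ on $N$ (cf.~\cite[Proposition~7.7]{MaicanTwoSemiSt}), it suffices to trivialize $\cal V$ over $U$. Concretely, the preceding lemma gives a local section $\sigma\colon U\ra \bb V^s$ of the Kronecker quotient, $\sigma(\alpha,\beta,a,b,c,d)=\smat{-x_2&cx_1&\bar x_0\\x_1&-\bar x_0+ax_1+bx_2&dx_2}$. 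Pulling back along $\sigma$, the fibre of $\cal V$ over a point of $U$ is identified with the space of triples $(q_0,q_1,q_2)$ of quadrics that can appear as the top row of a matrix~\refeq{eq: res0} over that Kronecker module, \emph{modulo} the freedom of adding a combination of the two rows of $\sigma$ — that is, modulo the subspace spanned by $(x_1 r - x_2 s,\, \text{(entries)}\dots)$ coming from the $\Aut(\O_{\P_2}(-3)\oplus 2\O_{\P_2}(-2))$-action. The point is that this quotient space has a \emph{canonical} complement independent of the base point: I would argue that every triple $(q_0,q_1,q_2)\in 3S^2V^*$ is equivalent, modulo the row operations, to a unique triple with $q_0\in S^2V^*$ arbitrary but $q_1,q_2\in\Span(x_1^2,x_1x_2,x_2^2)$, i.e.\ with $q_1,q_2$ free of the variable $\bar x_0$. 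This is the same normal-form computation already carried out in the $N'$-fibre discussion (rewriting~\refeq{eq: fibre over N'} by subtracting $x_2 w$ from $q_1$ and adding $x_1 w$ to $q_2$), done now with $\bar x_0$ in place of $x_0$ and with the extra parameters $a,b,c$ present; because reducing $q_1$ and $q_2$ modulo $\bar x_0$ uses exactly the two independent syzygy directions and the result is unchanged under varying $(\alpha,\beta,a,b,c,d)$, one gets a trivialization $U\times\P_{11}\iso \bb B|_U$ with $\P_{11}=\P(S^2V^*\oplus 2\Span(x_1^2,x_1x_2,x_2^2))$ and the class of $(q_0,q_1,q_2)$ corresponding to the class of the matrix~\refeq{eq: standard A}.

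Second, I would check that~\refeq{eq: standard A} is indeed the matrix attached to the point $\bigl((\alpha,\beta,a,b,c,d),[q_0,q_1,q_2]\bigr)$: over the section $\sigma$, the general element of $\bb W^s$ lying above it is $\smat{q_0'&q_1'&q_2'\\ \hline \sigma}$ with $(q_0',q_1',q_2')$ in the same $\Aut$-orbit as $(q_0,q_1,q_2)$, and the normalization from step one shows we may and must take $(q_0',q_1',q_2')=(q_0,q_1,q_2)$ with $q_1,q_2\in\Span(x_1^2,x_1x_2,x_2^2)$; this pins down~\refeq{eq: standard A} and simultaneously exhibits a section $U\times\P_{11}\ra \bb W^s$ of $\bb W^s\ra\bb B$ over $\bb B|_U$ (the residual $\GL$-scaling being absorbed into the projectivization). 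Third, the affine charts: on the open set of $\P_{11}$ where a chosen coefficient of $q_0,q_1$ or $q_2$ is nonzero, rescale so that coefficient equals $1$; the remaining $11$ coefficients are affine coordinates, giving $U\times\k^{11}$, and the section of $\bb W^s$ above becomes an honest (non-projective) lift, i.e.\ a local section of $\bb W^s\ra\bb B$ in the literal sense.

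The only genuine content — and the step I expect to be the main obstacle — is the \emph{uniqueness} half of the normal form in step one: that no nonzero row combination preserves the shape ``$q_1,q_2$ have no $\bar x_0$-term''. Equivalently, one must see that the two syzygy vectors of the minors of $\sigma$ — which by Hilbert–Burch are the rows $(-x_2,\,cx_1,\,\bar x_0)$ and $(x_1,\,-\bar x_0+ax_1+bx_2,\,dx_2)$ — together with the first basis vector $(1,0,0)\cdot(\text{quadric})$ of the first summand, meet the subspace $S^2V^*\oplus 2\Span(x_1^2,x_1x_2,x_2^2)$ only in $0$ when we quotient appropriately; concretely, adding $p\cdot(\text{row }1)+r\cdot(\text{row }2)$ for linear forms $p,r$ changes $q_0$ freely but alters $q_1,q_2$ by $cpx_1 - r\bar x_0 + r(ax_1+bx_2)$ and $p\bar x_0 + rdx_2$, and for both of these to lie in $\Span(x_1^2,x_1x_2,x_2^2)$ — for \emph{all} base points, in particular at $a=b=c=d=0$ where they become $-r\bar x_0$ and $p\bar x_0$ — forces $p=r=0$. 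This transversality is what makes the complement canonical and hence the bundle trivial over $U$; once it is in hand, everything else is a direct unwinding of definitions.
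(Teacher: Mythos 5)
Your argument is correct and follows essentially the same route as the paper, whose entire proof is the one-line observation that, as in the fibre computation over $N'$, one can use the two Kronecker rows to eliminate $\bar x_0$ from $q_1$ and $q_2$; your added transversality check (that no nonzero combination $p\cdot(\text{row }1)+r\cdot(\text{row }2)$ preserves the shape $q_1,q_2\in\Span(x_1^2,x_1x_2,x_2^2)$) is exactly the uniqueness statement the paper leaves implicit. Carrying out that verification explicitly only at $a=b=c=d=0$ is harmless here, since the condition is open and $U$ may be shrunk (and in fact the computation goes through verbatim at every point of $U$).
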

\begin{proof}
It is enough to notice that as in~\refeq{eq: fibre over N'} one can get rid of $x_0$ in the expressions of $q_1$ and $q_2$.
\end{proof}

In order to get charts around $[A]\in \bb B'$,
\[
A=\pmat{0&-x_2\cdot w&x_1\cdot w\\-x_2&0&\bar x_0\\x_1&-\bar x_0&0},\quad  w=\gamma x_1+\delta x_2,\quad \point{\gamma, \delta}\in \P_1,
\]
rewrite~\refeq{eq: standard A} as
\begin{equation}\label{eq: standard A around B'}
\pmat{
q_0(x_0, x_1, x_2)&q_1(x_1,x_2) -x_2\cdot w&q_2(x_2)+x_1\cdot w\\
-x_2&cx_1&\bar x_0\\
x_1&-\bar x_0 + ax_1+bx_2 &dx_2}.
\end{equation}
Putting $\gamma=1$ or $\delta=1$, we get charts around $\bb B'$, each isomorphic to $U\times \k\times\k^{10}$. Denote them by $\bb B(\gamma)$ and $\bb B(\delta)$ respectively. Their coordinates  are those of $U$ together with $\delta$ respectively $\gamma$ and the coefficients of $q_i$, $i=0,1,2$.

The equations of $\bb B'$ are those of $N'$ in $U$, i.~e., $a=b=c=d=0$,  and the conditions imposed by vanishing of $q_0$, $q_1$, $q_2$.
\begin{rem}
Notice that these equations generate the ideal given by the vanishing of the determinant of~\refeq{eq: standard A around B'}.
\end{rem}

\section{Main result}\label{section: main result}
Consider the blow-up $\tilde{\bb B}=\Bl_{\bb B'}\bb B$. Let $D$ denote its exceptional divisor.

\begin{tr}\label{tr: main result}
$\tilde{\bb B}$ is isomorphic to the blow-up $\tilde M\defeq \Bl_{M_1}M$. The exceptional divisor of $\tilde M$ corresponds to $D$ under this isomorphism. The fibres of the morphism $D\ra M_1$ over the point of $M_1$ represented by a point $p$ on a quartic curve $C$ is identified with the projective line of lines in $\P_2$ passing through $p$.
\end{tr}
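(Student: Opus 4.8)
The strategy is to produce an isomorphism $\tilde{\bb B} = \Bl_{\bb B'}\bb B \iso \tilde M = \Bl_{M_1} M$ by exhibiting a flat family of stable $(4m-1)$-sheaves on $\tilde{\bb B}$ whose classifying map $\tilde{\bb B} \ra M$ contracts the exceptional divisor $D$ onto $M_1$ in exactly the way a blow-down of a blow-up does. Concretely, I would first recall from Section~\ref{section: two strata} that every sheaf in $M$ is the cokernel of a morphism $2\O_{\P_2}(-3)\oplus 3\O_{\P_2}(-2)\ra \O_{\P_2}(-2)\oplus 3\O_{\P_2}(-1)$; the point of passing to this ``larger'' resolution is that it is available uniformly on $M_0$ (where it comes from~\refeq{eq: res0} after tensoring up by a Koszul-type complex) and on $M_1$ (where it comes from~\refeq{eq: res1}). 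Over the charts $\bb B(\gamma)$, $\bb B(\delta)$ constructed in Section~\ref{section: B}, the universal matrix~\refeq{eq: standard A around B'} degenerates along $\bb B'$ precisely when the entries cut out by the ideal $(a,b,c,d,q_0,q_1,q_2)$ vanish, and by the Remark after~\refeq{eq: standard A around B'} this ideal is generated by the coefficients of $\det$ of that matrix. Blowing up this ideal is what $\tilde{\bb B}$ does; on the exceptional divisor one reads off the ``direction'' of degeneration, which should be exactly the data of a point $p\in C$ together with a line through $p$.

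The key steps, in order, are: \emph{(1)} On $\bb B\setminus \bb B' = M_0$, the identity family already gives the open embedding $M_0 \hookrightarrow M$; I only need to extend it over $D$. \emph{(2)} Work in the charts $\bb B(\gamma)$, $\bb B(\delta)$. The exceptional divisor $D$ of $\Bl_{\bb B'}$ is covered by charts where one of the local equations of $\bb B'$ — say $a$, or $c$, or a coefficient of some $q_i$ — is a coordinate $t$ on the normal direction, and the remaining generators of the ideal of $\bb B'$ become $t\cdot(\text{new coordinates})$. Substituting into~\refeq{eq: standard A around B'} and cancelling the common factor of the appropriate rows/columns (this is the Hilbert--Burch bookkeeping that was already rehearsed for the fibres over $N'$), I obtain a matrix of the shape~\refeq{eq: res1}: $2\O_{\P_2}(-3)\ra \O_{\P_2}(-2)\oplus\O_{\P_2}$ with linearly independent linear forms $z_1,z_2$, hence a point of $M_1$. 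This defines the morphism $D\ra M_1$ and exhibits the whole family over $\tilde{\bb B}$. \emph{(3)} Identify the fibre of $D\ra M_1$: over a point of $M_1$ given by $(C,p)$, the normal directions to $\bb B'$ that produce that $(C,p)$ form a $\P_1$; chasing the construction, the projectivized normal cone direction records the line $L$ through $p=Z(z_1,z_2)$ along which $Z$ (or the extension) degenerates, so the fibre is the $\P_1$ of lines through $p$ — matching the geometric picture after the Remark in Section~\ref{section: two strata} that $H(3,4)\setminus H' \ra M$ has exactly these one-dimensional fibres over $M_1$. \emph{(4)} Conclude: the family over $\tilde{\bb B}$ gives a morphism $\tilde{\bb B}\ra M$ which is an isomorphism over $M_0$, contracts $D$ to $M_1$ with $\P_1$-fibres, and $M_1\subset M$ has codimension $2$ and is smooth; by the universal property of the blow-up $\tilde M = \Bl_{M_1} M$ and smoothness of both sides this morphism factors through an isomorphism $\tilde{\bb B}\iso \tilde M$ carrying $D$ to the exceptional divisor of $\tilde M$.

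**Main obstacle.** The genuinely delicate step is \emph{(2)}: verifying that in \emph{every} chart of the blow-up $\Bl_{\bb B'}\bb B$ the degenerate matrix~\refeq{eq: standard A around B'}, after the forced cancellation, really lands in the locus of matrices~\refeq{eq: res1} with \emph{linearly independent} $z_1,z_2$ — i.e.\ that no further degeneration (to $M_{11}$-type matrices with a vanishing $q_i$, or to non-stable sheaves) occurs, and that the resulting sheaf is stable so that it actually defines a point of $M$. This requires the explicit local computation with the generators $a,b,c,d$ and the coefficients of $q_0,q_1,q_2$, using that on $\bb B\setminus \bb B'$ (hence generically on $D$) stability is already known, together with openness of stability to propagate it; the Remark identifying the ideal of $\bb B'$ with the ideal of coefficients of $\det$ is exactly what makes the blow-up charts manageable. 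A secondary point to be careful about is flatness of the constructed family over $\tilde{\bb B}$ along $D$, which follows because the cokernel has constant Hilbert polynomial $4m-1$ there, but should be checked rather than asserted. The rigorous discharge of these local verifications is deferred to Section~\ref{section: proof}, where the family on $\tilde{\bb B}$ is constructed explicitly.
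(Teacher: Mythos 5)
Your proposal follows essentially the same route as the paper: pass to the uniform resolution $2\O_{\P_2}(-3)\oplus 3\O_{\P_2}(-2)\ra\O_{\P_2}(-2)\oplus 3\O_{\P_2}(-1)$, construct the family of $(4m-1)$-sheaves chart by chart on $\Bl_{\bb B'}\bb B$ via explicit row/column operations so that the limit at $t=0$ lands in $M_1$, identify the $\P_1$-fibres of $D\ra M_1$ with lines through $p$, and conclude by the universal property of the blow-up together with Zariski's main theorem and smoothness. The obstacle you flag in step \emph{(2)} is exactly what the paper discharges by its explicit normal form, where the linear forms $\bar x_0$ and $w$ are linearly independent by construction.
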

\subsection{A rather intuitive explanation}
Before rigorously proving this, let us explain how to arrive to  Theorem~\ref{tr: main result} and ``see'' it just by looking at the geometric data involved.  What follows in not quite rigorous but provides, in our opinion, a nice geometric picture.

Blowing up $\bb B$ along $\bb B'$ substitutes $\bb B'$ by the projective  normal
 bundle of $\bb B'$. So a point of $\bb B'$ represented by a line $L\in \P_2^*$ and a point $p\in L$, which is encoded by some $\point{w}\in \P H^*(L, \O_L(1))$,  is substituted by the projective space $D_{(L, p)}$ of the normal space $\Tsp_{(L, p)} \bb B/\Tsp_{(L, p)}\bb B'$ to $\bb B'$ at $(L, p)$.

 As $\bb B$ is a projective bundle over $N$, and $\bb B'$ is a $\P_1$-bundle over $N'$, the normal space is a direct sum of the normal spaces along the base and along the fibre. Therefore, $D_{(L, p)}$ is the join of the corresponding projective spaces: of $\P_3=L^{[3]}$ (normal projective space to $N'$ in $N$ at $L\in N'$) and $\P_9=\P(S^3 V^*)$ (normal projective space to $L^*$ in $J(L^*, \P(S^3 V^*))$ at $p\in L\iso L^*$; notice that the normal bundle of  $L^*\subset J(L^*, \P(S^3 V^*))$, i.~e., $\P_1\subset \P_{11}$, is trivial).

The space $L^{[3]}$  is naturally identified with the projective space of cubic forms on $L^*$ whereas
$\P(S^3 V^*)$ is clearly the space of cubic curves on $\P_2$.

Assume  $L=Z(x_0)$ such that $\{x_0, x_1, x_2\}$ is a basis of $V^*=H^0(\P_2, \O_{\P_2}(1))$. Identifying $x_1$ and $x_2$ with their images in $H^0(L, \O_L(1))$, $\{x_1, x_2\}$ is a basis of $H^0(L, \O_L(1))$.

We conclude that the join of $L^{[3]}$ and $\P S^3 V^*$  can  be  identified with the projective space corresponding to the vector space
\[
\{\lambda\cdot x_0 h+ \lambda'\cdot w g \mid h(x_0, x_1, x_2)\in S^3 V^*,  g(x_1, x_2)\in H^0(L, \O_L(3)),  \point{\lambda, \lambda'}\in \P_1\},
\]
i.~e., the space of planar quartic curves through the point $p=Z(x_0, w)$.

So the exceptional divisor of the blow-up $\Bl_{\bb B'}\bb B$ is a projective bundle with fibre over $(L, p)$ being interpreted as the space of quartic curves through $p$.

The fibre of  $\bb B\ra N$ over $L\in N'$ is substituted by the fibre that consists of two components: the first component is the blow-up of $J(L^*, \P(S^3 V^*))$ along $L^*$,  the second one is a projective bundle over $L^*$ with the fibre $\P_{13}=J( L^{[3]}, \P(S^3 V^*) )$, the components intersect along  $L^*\times \P(S^3 V^*)$.
\begin{figure}[H]
\begin{center}
\begin{tikzpicture}[scale=1.2]
\begin{scope}
\draw ++(0, 0) to ++(2, 0) to ++(0, 2) to ++(-2, 0) to ++(0,-2);
\foreach \x in {1,..., 3}
{
\draw ++(2cm+3*\x mm ,2cm +3*\x mm ) to ++(-2, 0);
\draw ++(2cm+3*\x mm ,2cm +3*\x mm ) to ++(0, -2);
\draw ++(3*\x mm ,2cm +3*\x mm ) to ++(0, -3 mm);
\draw ++(2cm+3*\x mm ,3*\x mm) to ++(-3mm, 0);
}
\path node at (1,0.6) {$\P(S^3 V^*)$};
\end{scope}
\begin{scope}[shift={(5,0)}]
\draw ++(0, 2) to ++(2, 0) to ++(-0.5, -2) to (0, 2);
\foreach \x in {1,..., 3}
{
 \draw ++(-3*\x mm, 2cm +3*\x mm) to ++(2, 0) to ++(-0.75mm, -3mm);
 \draw ++(-3*\x mm, 2cm +3*\x mm) to ++(1.5, -2);
 \draw ++(1.5 cm -3*\x mm, 3*\x mm) to ++( 0.1865 mm, 0.75 mm);
}
\path node at (1.3,1.2) {$L^{[3]}$};
\end{scope}
\foreach \x in {0,..., 3}
{
\draw[very thin] ++(2cm + 3*\x mm,2cm +3*\x mm) to ++(3cm, 0);
\draw[very thin] ++(2cm + 3*\x mm,3*\x mm) to ++(4.5cm -6*\x mm, 0);
}
\begin{scope}[shift={(2.5,-3)}]
\draw ++(0, 0) to ++(2, 0) to ++(0, 2) to ++(-2, 0) to ++(0,-2);
\path node at (1,1) {$\P(S^3 V^*)$};
\end{scope}
\draw[line width=1mm] (-0.3, 1.7) to ++(16mm, 16mm);
\draw[very thin] ++ (0, 0) to ++(2.5cm, -3cm);
\foreach \x in {0,..., 3}
{
\draw[very thin] ++(4.5, -1) to (2cm+3*\x mm, 2cm+3*\x mm);
}
\end{tikzpicture}
\caption{The fibre of $\tilde{\bb B}$ over $L\in N'$.}
\end{center}
\end{figure}
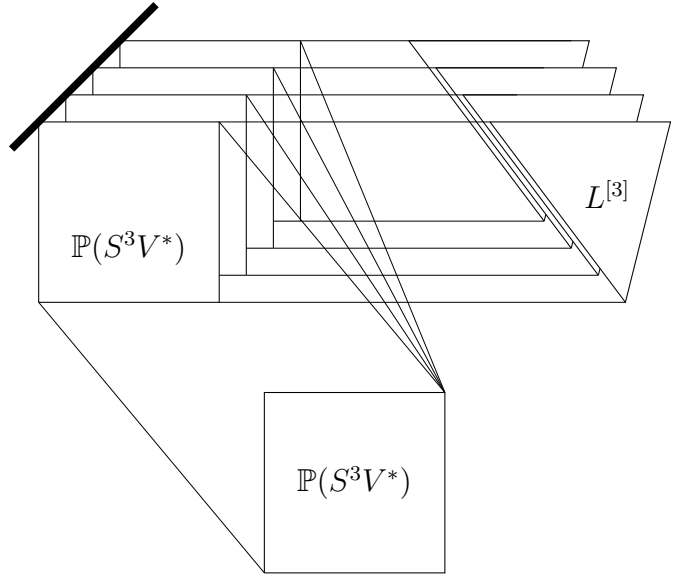

Consider the map $\bb B'\ra \P_2$ given by the projection to the second factor (cf.~\refeq{eq: B'}). The fibre $\bb B'_p$ over a point $p\in \P_2$ is a projective line
\(
\{(L, p) \mid p\in L\}
\)
in $\bb B'$, every two points $(L, p)$ and $(L', p)$ of $\bb B'_p\subset \bb B'$  are substituted by the projective spaces $J( L^{[3]}, \P(S^3 V^*) )$ and $J( L'^{[3]}, \P(S^3 V^*) )$ respectively, each of which is naturally identified with
the space of quartics through $p$.
Assume without loss of generality $p=\point{0,0,1}$.

The fibre $\bb B'_p$ in this case is identified with the space of  lines in $\P_2$ through $p$, i.~e., with the projective line in $N'=\P_2^*=\P V^*$ consisting of classes of linear forms $\alpha x_0+\beta x_1$, $\point{\alpha, \beta}\in \P_1$. The fibre has a standard covering $\bb B'_{p,0}=\{x_0+\beta x_1\}$, $\bb B'_{p,1}=\{\alpha x_0+x_1\}$, which is induced by the standard covering of $\P_2^*$. The elements of the fibre  corresponding to the points of $\bb B'_{p, 0}$ are the equivalence classes of matrices
\[
A_0=\pmat{
0& x_1\cdot x_2&-x_1\cdot x_1\\
-x_2&0&x_0+\beta x_1\\
x_1&-(x_0+\beta x_1)&0}.
\]
The elements of the fibre  corresponding to the points of $\bb B'_{p,1}$ are the equivalence classes of matrices
\[
A_1=\pmat{
0& x_0\cdot x_2&-x_0\cdot x_0\\
-x_2&0&\alpha x_0+x_1\\
x_0&-(\alpha x_0+x_1)&0}.
\]
This way we have chosen so to say the normal forms for the representatives of the points in the fibre $\bb B'_p$.
For $\beta=\alpha^{-1}$, i.~e. on the intersection of $\bb B'_{p,0}$ and $\bb B'_{p,0}$, these matrices are equivalent. One computes that
\(
g A_1 h=A_0
\)
for  matrices
\[
g=\pmat{\alpha^3&\alpha^2x_0-\alpha x_1&\alpha^2x_2\\
0&1&0\\
0&0&-\alpha},
\quad
h=\pmat{1&0&0\\
\alpha^{-1}&-\alpha^{-2}&0\\
0&0&\alpha^{-1}
}
\]
with determinants
\[
\det g=-\alpha^4, \quad \det h=-\alpha^{-3}.
\]
Consider the automorphism $\bb W^s\xra{\xi} \bb W^s$, $A\mto g A h$.
Then
\begin{equation}\label{eq: cocycle on fibres}
\det(\xi(A_1+B_1))=\alpha\cdot\det(A_1+B_1).
\end{equation}

From \refeq{eq: cocycle on fibres} it follows that
the restriction of the ideal sheaf of $D$ to a fibre of the morphisms $D\ra M_1$ is $\O_{\P_1}(1)$. By ~\cite{Nakano, NakanoSupl1, NakanoSupl2}, this means that one can blow down $D$ in $\tilde{\bb B}$ along the map $D\ra M_1$.
This gives the blow down $\Bl_{\bb B'}\bb B\ra M$ that
contracts the exceptional divisor of $\Bl_{\bb B'}\bb B$ along all lines $\bb B'_p$.

\subsection{Poincar\'e polynomial of $M$}
Theorem~\ref{tr: main result} provides an easy way to compute the Poincar\'e polynomial $P(M)$ of $M$. Though our computation is similar to the one from~\cite[Corollary~5.2]{ChoiChung}, it is a bit more straightforward. Notice that $P(M)$ has been also computed using a torus action on $M$ in~\cite[Theorem~1.1]{ChoiMaican}.

\begin{cor}
The Poincar\'e polynomial of $M$ equals
$1+2q+6q^2+10q^3+14q^4+15q^5+16q^7+16q^8+16q^9+16q^{10}+16q^{11}+15q^{12}+14q^{13}+10q^{14}+6q^{15}+2q^{16}+q^{17}$.
\end{cor}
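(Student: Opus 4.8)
The plan is to deduce the Corollary directly from Theorem~\ref{tr: main result} together with the standard behaviour of Poincar\'e polynomials under blow-up and blow-down along smooth centers, so the only genuine inputs are the dimensions and Poincar\'e polynomials of the pieces $N$, $\bb B$, $\bb B'$, $M_1$, and the exceptional divisor $D$. Concretely, I would use the classical formula (valid in characteristic $0$ for smooth projective varieties, via the decomposition theorem or just the blow-up formula for cohomology): if $\tilde X=\Bl_Y X$ with $X$ smooth projective of dimension $n$ and $Y\subset X$ smooth of codimension $r$, then
\[
P(\tilde X)=P(X)+P(Y)\cdot(q^2+q^4+\dots+q^{2(r-1)}).
\]
Applying this once to $\tilde{\bb B}=\Bl_{\bb B'}\bb B$ and once to $\tilde M=\Bl_{M_1}M$, and using that Theorem~\ref{tr: main result} gives $\tilde{\bb B}\iso\tilde M$, we get
\[
P(\bb B)+P(\bb B')\cdot\tfrac{q^{2(c-1)}-1}{q^2-1}\cdot q^2 = P(M)+P(M_1)\cdot(q^2),
\]
where $c=\codim_{\bb B}\bb B'$ and $\codim_M M_1=2$; solving for $P(M)$ expresses it in terms of the other four polynomials.

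The next step is to compute each of those four Poincar\'e polynomials from the descriptions already established in the paper. For $\bb B\ra N$: $N=N(3;2,3)$ is the smooth $6$-dimensional Kronecker-module space, and by the blow-up description $\Bl_{N'}N\iso H=\P_2^{[3]}$ with $N'\iso\P_2^*$ and exceptional divisor a $\P_3$-bundle over $\P_2^*$; inverting the blow-up formula yields $P(N)$ from the known $P(\P_2^{[3]})$ (Ellingsrud--Str\o mme / G\"ottsche) and $P(\P_2^*)$. Since $\bb B\ra N$ is a projective bundle with fibre $\P_{11}$, $P(\bb B)=P(N)\cdot(1+q+\dots+q^{11})$. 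For $\bb B'$: it is the tautological $\P_1$-bundle over $\P_2^*$ (the flag variety $\{(L,p)\}$), so $P(\bb B')=(1+q+q^2)(1+q)$. For $M_1$: it is the universal plane quartic, a $\P_2$-bundle (the universal line, i.e. $\{(C,p):p\in C\}\subset \P(S^4V^*)\times\P_2$, a $\P_{13}$-bundle over $\P_2$), so $P(M_1)=(1+q+q^2)(1+q+\dots+q^{13})$; the dimensions $6+11=17$, $2+1=3$ wait — I must double-check: $\bb B'$ has dimension $3$, $\dim \bb B = 17$, so $c=14$, and the computation must reproduce $\dim M=17$, $\dim M_1 = 15$, $\dim D = 16$. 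Finally, I would also cross-check using $P(D)$: $D$ is the exceptional divisor, a $\P_{13}$-bundle over $M_1$ on the $\tilde M$ side, and a $\P_{13}$-bundle over $\bb B'$ (more precisely a projective bundle with fibre the join $J(L^{[3]},\P S^3V^*)=\P_{13}$) on the $\tilde{\bb B}$ side — these must agree, giving a consistency check $P(M_1)=P(\bb B')\cdot(\text{something})$, which in fact they cannot literally since $\dim M_1=15\neq 3+?$; the correct statement is that $D$ fibers over $\bb B'$ with fibre $\P_{13}$ and over $M_1$ with fibre $\P_1$, forcing $P(M_1)\cdot(1+q)=P(\bb B')\cdot(1+q+\dots+q^{13})$, a relation I would verify numerically as a sanity check before substituting.

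Then it is pure bookkeeping: plug the four polynomials into the inverted blow-up/blow-down identity and simplify. I would organize the arithmetic as: (i) write $P(\bb B)=P(N)(1+\dots+q^{11})$ with $P(N)$ obtained by dividing $P(\P_2^{[3]})-P(\P_2^*)q^2(1+q^2+q^4)$ by nothing — rather, $P(\P_2^{[3]})=P(N)+P(\P_2^*)(q^2+q^4+q^6)$, so $P(N)=P(\P_2^{[3]})-(1+q+q^2)(q^2+q^4+q^6)$; (ii) form $P(\bb B)+P(\bb B')(q^2+q^4+\dots+q^{26})$ for the left side (codimension $14$ gives exponents $2,4,\dots,26$); (iii) subtract $P(M_1)\cdot q^2$; (iv) read off $P(M)$. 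The main obstacle is not conceptual but arithmetic reliability — it is a degree-$17$ polynomial identity with several bundle contributions, and an error in $P(\P_2^{[3]})$ or in a codimension/fibre-dimension would be invisible except through the self-consistency of the final answer; so the real ``hard part'' is marshalling the correct known value $P(\P_2^{[3]})=1+2q+5q^2+8q^3+10q^4+8q^5+5q^6+2q^7+q^8$ (and the analogous Betti data) and checking the two independent consistency constraints (the $D$-fibration relation, and Poincar\'e duality / total dimension $17$ of the answer) before declaring the computation finished. The stated result $1+2q+6q^2+\dots+q^{17}$ is palindromic of degree $17$ as it must be, which is the final sanity check I would perform.
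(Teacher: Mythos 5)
Your strategy is exactly the paper's: obtain $P(N)$ from the known Poincar\'e polynomial of $H=\P_2^{[3]}$ via the blow-up relation along $N'\iso\P_2^*$ (codimension $4$, exceptional divisor a $\P_3$-bundle), multiply by $P(\P_{11})$ to get $P(\bb B)$, and then use Theorem~\ref{tr: main result} to trade $\bb B'$ for $M_1$. The paper phrases this last step as the substitution $P(M)=P(\bb B)-P(\bb B')+P(M_1)$, while you apply the blow-up formula twice and solve for $P(M)$; the two are equivalent precisely because of the relation $P(M_1)\cdot P(\P_1)=P(\bb B')\cdot P(\P_{13})$ coming from the two fibration structures on $D$, which you correctly single out as a consistency check. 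Conceptually nothing is missing.

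The problem is that the one external numerical input is wrong, so the computation as you set it up would not produce the stated polynomial. You quote $P(\P_2^{[3]})=1+2q+5q^2+8q^3+10q^4+8q^5+5q^6+2q^7+q^8$, a polynomial of degree $8$, whereas $\P_2^{[3]}$ has dimension $6$; the correct value (Elencwajg--Le Barz, Ellingsrud--Str\o mme) is $1+2q+5q^2+6q^3+5q^4+2q^5+q^6$. With your value, $P(N)=P(H)-(1+q+q^2)(q+q^2+q^3)$ has degree $8$ for the $6$-dimensional variety $N$, so the dimension/duality sanity checks you propose to run at the end would already fail one stage earlier. With the correct input one gets $P(N)=1+q+3q^2+3q^3+3q^4+q^5+q^6$ and the rest of your bookkeeping goes through (you should also fix the normalization: you switch between $P(\P_n)=\sum q^i$ and $\sum q^{2i}$ when writing the blow-up contributions, e.g.\ codimension $14$ contributes $P(\bb B')(q+\dots+q^{13})$ in the paper's convention). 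Finally, note that your ``palindromicity'' check does not actually pass on the displayed statement: the polynomial in the Corollary is missing the term $16q^6$ (evidently a typo), which the correct computation produces.
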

\begin{proof}
By~\cite[page 90]{Elencwajg} the Poincar\'e polynomial of $H$ is
\(
P(H)=1+2q+5q^2+6q^3+5q^4+2q^5+q^6.
\)
Then $P(N)=P(H)-P(\P_2)\cdot P(\P_3)+P(\P_2)$. Since $\bb B$ is a projective bundle over $N$ with fibre $\P_{11}$, one gets $P(\bb B)=P(N)\cdot P(\P_{11})$. By blowing up $\bb B$ along $\bb B'$ and blowing down the result to $M$ one substitutes $\bb B'$ by $M_1$, which is a $\P_{13}$-bundle over $\P_2$. Therefore, $P(M)=P(\bb B)-P(\P_2)\cdot P(\P_1)+P(\P_{2})\cdot P(\P_{13})$. Using $P(\P_n)=\frac{1-q^n}{1-q}$ we get the result.
\end{proof}

\section{The proof}\label{section: proof}
Now let us properly prove Theorem~\ref{tr: main result}.

\subsection{Exceptional divisor $D$ and quartic curves}
Notice  that the subvariety $\bb W'$ in $\bb W^s$ parameterizing $\bb B'$ is given by the condition $\det A=0$.
This way we obtain a morphism $\tilde{\bb B}\ra \P S^4V^*$.
\begin{lemma}\label{lemma: divisor D}
1) The restriction of  $\tilde{\bb B}\ra \P S^4V^*$ to $D$ maps a point of $D$ lying over a point $p\in \P_2$ \textup{(}via the map $D\ra \bb B'\ra \P_2$\textup{)} to a quartic curve through $p$, i.~e., there is a morphism $D\ra M_1\subset \P_2\times \P S^4V^*$.

2) The fibre $D_{(L, p)}$ of $D\ra \bb B'$ over $(L, p)\in \bb P_2\times\P^*_2$, $p\in L$, is isomorphic via the map $\tilde{\bb B}\ra \P S^4V^*$ to the linear subspace in $\P S^d V^*$ of curves through $p$.

3) The morphism $D\ra M_1$ is a $\P_1$-bundle over $M_1$,
its fibre over a point of $M_1$ given by a pair $p\in C$ can be identified with the fibre of $\bb B'\ra \P_2$ over $p$.
\end{lemma}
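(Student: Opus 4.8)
The plan is to work entirely inside the two explicit charts $\bb B(\gamma)$ and $\bb B(\delta)$ constructed in Section~\ref{section: B}, using the normal form \refeq{eq: standard A around B'} of the matrix $A$. Recall from the Remark at the end of Section~\ref{section: B} that the ideal of $\bb B'$ in each chart is generated by the equations $a=b=c=d=0$ of $N'$ together with the vanishing of the coefficients of $q_0,q_1,q_2$, and that these generators are precisely the coefficients (as polynomials in $x_0,x_1,x_2$) of $\det A$. First I would expand $\det A$ for the matrix \refeq{eq: standard A around B'}: its value at a point of $\bb B'$ (all those parameters zero) is $0$, and modulo the square of the ideal of $\bb B'$ it is a \emph{linear} form in the ideal generators whose coefficients are the quartic monomials through $p=Z(x_0,w)$. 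Concretely, writing $w=\gamma x_1+\delta x_2$ and keeping only terms linear in $a,b,c,d$ and in the coefficients of $q_i$, one reads off that $\det A \equiv x_0\cdot(\text{cubic from the }q_i\text{-part and }a,b,c,d) + w\cdot(\text{cubic } g)$, exactly the form appearing in the intuitive discussion, i.e. a general element of the space of quartics vanishing at $p$. This is the computational heart of part~1) and part~2): the induced rational map $\bb B \dashrightarrow \P S^4V^*$ sending $[A]\mapsto Z(\det A)$ is, after blowing up $\bb B'$, resolved on $D$, and on the fibre $D_{(L,p)}$ the strict transform of $\det A$ is the linear system of quartics through $p$. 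That $\det A$ lies in the first infinitesimal neighbourhood in the way required — i.e. that the $(2\times 2)$-minors of the linear part supply, via Hilbert--Burch as in the description of the fibres of $\bb B\ra N$, no further relations among these quartic coefficients — is the point I expect to need the most care.

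For part~1) I would then argue that the morphism $D\to\P S^4V^*$ composed with the map $D\to\bb B'\to\P_2$ lands in the incidence variety $\{(C,p)\mid p\in C\}\cong M_1$: membership $p\in Z(\det A)$ is immediate since $\det A$ as computed above visibly vanishes at $p$. So we get $D\to M_1\subset\P_2\times\P S^4V^*$, and we must check it is a genuine morphism (not merely rational) — this is where resolving the indeterminacy by the blow-up is used, and it follows because the generators of the ideal of $\bb B'$ that we just identified are exactly the components of $\det A$, so the pulled-back linear system becomes base-point-free on $D$ by the universal property of the blow-up.

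For part~3), the fibre of $D\to\bb B'$ over $(L,p)$ is $\P(N_{\bb B'/\bb B,(L,p)})$, a projective space of dimension $\codim \bb B' - 1 = (12-2)+(\dim\text{fibre of }\bb B\ra N - \dim L^*) - 1$; splitting the conormal into its base part ($L^{[3]}\cong\P_3$) and fibre part ($\P S^3V^*\cong\P_9$, with trivial normal bundle of $\P_1\subset\P_{11}$), $D_{(L,p)}$ is the join $J(\P_3,\P_9)\cong\P_{13}$, and part~2) identifies it with the $\P_{13}$ of quartics through $p$. Now fix a point of $M_1$, i.e. a pair $p\in C$. Its preimage in $D$ maps to $\bb B'$, and a point $(L,p)\in\bb B'$ is in the preimage iff $C$ lies in the corresponding linear system $\P_{13}$ of quartics through $p$ — which holds for \emph{every} line $L\ni p$, since $\P_{13}$ is the full space of quartics through $p$ independently of $L$. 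Hence the fibre of $D\to M_1$ maps bijectively onto the fibre $\bb B'_p = \{(L,p)\mid p\in L\}\cong\P_1$ of $\bb B'\to\P_2$; to upgrade "bijective" to "is a $\P_1$-bundle", I would note that $D\to\bb B'$ is a $\P_{13}$-bundle and over a fixed $p$ the point $C$ selects, in each fibre $\P_{13}$ over $(L,p)$, a single point depending morphically on $L$ (the section $(L,p)\mapsto (L,p,C)$ is the graph of a morphism in the charts above), so the fibre of $D\to M_1$ over $p\in C$ is the image of that section, a copy of $\bb B'_p\cong\P_1$; flatness and the constant fibre then give the $\P_1$-bundle structure. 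The main obstacle, as noted, is the bookkeeping in the first paragraph: showing that the linear-in-the-ideal part of $\det A$ really sweeps out the \emph{full} $13$-dimensional space of quartics through $p$ with no redundancy, which amounts to checking that the $\P_3$ (from $a,b,c,d$, i.e. $L^{[3]}$) and the $\P_9$ (from $q_0$ and the cubic $g$) contribute independent linear subsystems whose join is all quartics through $p$ — this is exactly the join decomposition of the conormal space, but it must be verified against the explicit matrix entries rather than merely asserted.
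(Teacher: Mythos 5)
Your plan is correct and follows essentially the same route as the paper: the paper likewise linearizes the determinant along directions normal to $\bb B'$ (writing $\det(A+tB)\equiv f_{A,B}\cdot t \bmod (t^2)$ and computing $f_{A,B}=x_0\sum_i x_ib_{0i}-w\bigl(x_1\sum_i x_ib_{1i}+x_2\sum_i x_ib_{2i}\bigr)$), observes that this quartic vanishes at $p=Z(x_0,w)$, and notes that varying the normal direction sweeps out the full $14$-dimensional space of quartic forms through $p$ --- which is precisely the verification you flag as the remaining obstacle, and which this explicit formula settles. Parts 2) and 3) are then deduced exactly as you describe.
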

\begin{proof}
1) Let $[A]\in \bb B'$ with $A$ as in~\refeq{eq: zero det} and let $a_0, a_1, a_2$ be the rows of $A$.
Let $B$ be a tangent vector at $A$, which can be identified with a morphism of type~\refeq{eq: res0}. Let $b_0, b_1, b_2$ be its rows. Then, since $\det A=0$,
\[
\det(A+tB)=
f_{A,B}\cdot t\mod (t^2),\quad\text{for }
f_{A,B}=
\det\smat{b_0\\a_1\\a_2}+
\det\smat{a_0\\b_1\\a_2}+
\det\smat{a_0\\a_1\\b_2}.
\]
Then $f_{A, B}$ is a non-zero quartic form if $B$ is normal to $\bb W'$. One computes
\[
f_{A, B}=
x_0\sum_{i=0}^2x_ib_{0i}-w(x_1\sum_{i=0}^2 x_ib_{1i}+x_2\sum_{i=0}^2x_ib_{2i})
\]
and thus $f_{A, B}$  vanishes at $p$, which is the common zero point of $x_0$ and $w$.

2)  Since the map $D\ra \P S^4V^*$ is injective, it is enough to notice that,
for a fixed $A\in \bb W'$, every quartic form through $p$ can be obtained by varying $B$. This gives a bijection and thus an isomorphism from $D_{(L,p)}$ to the space of quartics through $p$.

3) Follows from 1) and 2). 
\end{proof}

\subsection{Local charts.}\label{subsection: local charts of the blow-up}
Let us describe $\tilde{\bb B}$ over $\bb B(\delta)$. Around points of  $D$ lying over $[A]\in \bb B(\delta)$ there are $14$ charts.
For a fixed coordinate $t$ of $\bb B(\delta)$ different from $\alpha$, $\beta$, $\gamma$, denote the corresponding chart of  $\Bl_{\bb B'\cap B(\delta) }\bb B(\delta)$ by $\tilde{\bb B}(t)$.
Then $\tilde{\bb B}(t)$ can be identified with the variety of triples $(A, t, B)$,
\begin{align}\label{eq: blow up coordinates}
\begin{split}
A&=\pmat{
0&-x_2\cdot (\gamma x_1+x_2)&x_1\cdot (\gamma x_1+x_2)\\
-x_2&0&\bar x_0\\
x_1&-\bar x_0&0},
\\
B&=\pmat{
q_0(x_0, x_1, x_2)&q_1(x_1,x_2) &q_2(x_2)\\
0&cx_1&0\\
0&ax_1+bx_2 &dx_2},
\end{split}
\end{align}
such that the coefficient of  $B$ corresponding to $t$ equals $1$ and $A+t\cdot B$ belongs to $\bb B(\delta)$.
The blow-up map $\tilde{\bb B}(t)\ra \bb B(t)$ is given under this identification by  sending a triple $(A, t,B)$ to $A+t\cdot B$.

\subsection{Family of $(4m-1)$-sheaves on $\tilde{\bb B}$.}
Notice that the cokernel of~\refeq{eq: res0} is isomorphic to the cokernel of
\[
2\O_{\P_2}(-3)\oplus 3\O_{\P_2}(-2)
\xra{
\pmat{
0&0&0&0\\
0&q_0&q_1&q_2\\
0&z_0&z_1&z_2\\
0&w_0&w_1&w_2\\
1&0&0&0
}
}
\O_{\P_2}(-2)\oplus 3\O_{\P_2}(-1).
\]
\subsubsection{Local construction}
\begin{lemma}
For $t\neq 0$ consider the matrix
\begin{equation}\label{eq: deformation at B'}
\pmat{
0&0&0&0\\
0&0&-wx_2&wx_1\\
0&-x_2&0&\bar x_0\\
0&x_1&-\bar x_0&0\\
1&0&0&0
}+
t\cdot
\pmat{
0&0&0&0\\
0&q_0&q_1&q_2\\
0&y_0&y_1&y_2\\
0&z_0&z_1&z_2\\
0&0&0&0
}
\end{equation}
as a morphism
\(
2\O_{\P_2}(-3)\oplus 3\O_{\P_2}(-2)
\xra{}\O_{\P_2}(-2)\oplus 3\O_{\P_2}(-1).
\)
Then its cokernel is isomorphic to the cokernel of
\begin{equation}\label{eq: deformation at B' new}
\pmat{
\bar x_0&0&0&0\\
w&0&0&0\\
0&-x_2&0&\bar x_0\\
0&x_1&-\bar x_0&0\\
t&0&x_2&-x_1
}+
\pmat{
0&x_1y_0+x_2z_0&x_1y_1+x_2z_1&x_1y_2+x_2z_2\\
0&q_0&q_1&q_2\\
0&ty_0&ty_1&ty_2\\
0&tz_0&tz_1&tz_2\\
0&0&0&0
}.
\end{equation}
\end{lemma}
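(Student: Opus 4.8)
The plan is to exhibit \refeq{eq: deformation at B' new} as the result of composing the morphism \refeq{eq: deformation at B'} with automorphisms of the source $2\O_{\P_2}(-3)\oplus3\O_{\P_2}(-2)$ and of the target $\O_{\P_2}(-2)\oplus3\O_{\P_2}(-1)$; since pre- and post-composition with isomorphisms does not change the cokernel, this proves the claim. Concretely one works with admissible elementary transformations of the presentation matrix: rescaling a row or a column, and adding to a row (resp.\ column) a form-multiple of another row (resp.\ column) whose degree matches the difference of the degrees of the two summands involved. With source degrees $3,3,2,2,2$ and target degrees $2,1,1,1$, this means an $\O_{\P_2}(-3)$-row may absorb a linear-form multiple of an $\O_{\P_2}(-2)$-row, and any of the three $\O_{\P_2}(-1)$-columns may absorb a linear-form multiple of the $\O_{\P_2}(-2)$-column.

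The first observation is that the third and the fourth rows of \refeq{eq: deformation at B'} and of \refeq{eq: deformation at B' new} are literally identical, so only the first, second and fifth rows need to be matched up. I would begin with column operations. The first column of \refeq{eq: deformation at B'} has all entries zero except for a $1$ in the last row; adding $x_2$ times the first column to the third column and $-x_1$ times the first column to the fourth column (both admissible, since $x_1,x_2$ are linear), and then rescaling the first column by $t$ (admissible and invertible precisely because $t\neq0$ --- this is the one place the hypothesis is used), turns the fifth row into $(t,0,x_2,-x_1)$ while leaving the other four rows untouched.

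It then remains to fix the first and second rows by row operations. Replacing the second row by $\tfrac1t$ times itself plus $\tfrac wt$ times the new fifth row (admissible: $w/t$ is a linear form and the second row sits over $\O_{\P_2}(-3)$) produces the second row $(w,q_0,q_1,q_2)$ after the $wx_2/t$- and $wx_1/t$-terms cancel. Finally, adding $\tfrac{x_1}t$, $\tfrac{x_2}t$ and $\tfrac{\bar x_0}t$ times the third, fourth and fifth rows to the (zero) first row gives, once the $x_1x_2/t$-terms from the third and fourth rows cancel each other and the $\bar x_0x_1/t$- and $\bar x_0x_2/t$-terms from the fifth row cancel those coming from the third and fourth rows, exactly the first row $(\bar x_0,\,x_1y_0+x_2z_0,\,x_1y_1+x_2z_1,\,x_1y_2+x_2z_2)$ of \refeq{eq: deformation at B' new}. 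All the multipliers that appeared ($w/t$, $x_1/t$, $x_2/t$, $\bar x_0/t$) are linear forms, so every step is of the permitted type; composing them yields isomorphisms of the source and the target defined over $\{t\neq0\}$, and the two cokernels agree.

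The genuine content is in choosing these particular combinations; everything else is bookkeeping. The step I expect to require the most care is the final row operation: one has to see that the coefficients $x_1/t$ and $x_2/t$ are precisely those forcing all the extraneous $\bar x_0$- and $x_1x_2$-terms to cancel while simultaneously producing the symmetric pattern $x_1y_i+x_2z_i$ of the top row of \refeq{eq: deformation at B' new}. Watching the gradings throughout --- so that no operation ever calls for a multiplier of negative degree --- is the other point one must not overlook.
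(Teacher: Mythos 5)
Your proof is correct and follows essentially the same route as the paper's: both realize the passage from \refeq{eq: deformation at B'} to \refeq{eq: deformation at B' new} by degree-compatible elementary row and column operations (automorphisms of $2\O_{\P_2}(-3)\oplus 3\O_{\P_2}(-2)$ and of $\O_{\P_2}(-2)\oplus 3\O_{\P_2}(-1)$), using $t\neq 0$ to rescale, and your cancellations in rows $1$ and $2$ check out. The only difference is cosmetic: the paper uses multipliers $w$, $x_1$, $x_2$, $\bar x_0$ and rescales rows by $t^{-1}$ and the first column by $t$ at the end, whereas you rescale the first column early and absorb the $t^{-1}$ into the multipliers.
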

\begin{proof}
Acting by the automorphisms of $2\O_{\P_2}(-3)\oplus 3\O_{\P_2}(-2)$ on the left and by the automorphisms of $\O_{\P_2}(-2)\oplus 3\O_{\P_2}(-1)$ on the right of~\refeq{eq: deformation at B'}, we transform this matrix as follows:
\begin{align*}
&\smat{
0&0&0&0\\
0&0&-wx_2&wx_1\\
0&-x_2&0&\bar x_0\\
0&x_1&-\bar x_0&0\\
1&0&0&0
}+
t\cdot
\smat{
0&0&0&0\\
0&q_0&q_1&q_2\\
0&y_0&y_1&y_2\\
0&z_0&z_1&z_2\\
0&0&0&0
}
\sim
\smat{
0&0&0&0\\
w&0&-wx_2&wx_1\\
0&-x_2&0&\bar x_0\\
0&x_1&-\bar x_0&0\\
1&0&0&0
}+
t\cdot
\smat{
0&0&0&0\\
0&q_0&q_1&q_2\\
0&y_0&y_1&y_2\\
0&z_0&z_1&z_2\\
0&0&0&0
}\\
\sim
&\smat{
0&0&0&0\\
w&0&0&0\\
0&-x_2&0&\bar x_0\\
0&x_1&-\bar x_0&0\\
1&0&x_2&-x_1
}+
t\cdot
\smat{
0&0&0&0\\
0&q_0&q_1&q_2\\
0&y_0&y_1&y_2\\
0&z_0&z_1&z_2\\
0&0&0&0
}
\sim
\smat{
\bar x_0&0&\bar x_0x_2&-\bar x_0x_1\\
w&0&0&0\\
0&-x_2&0&\bar x_0\\
0&x_1&-\bar x_0&0\\
1&0&x_2&-x_1
}+
t\cdot
\smat{
0&0&0&0\\
0&q_0&q_1&q_2\\
0&y_0&y_1&y_2\\
0&z_0&z_1&z_2\\
0&0&0&0
}\\
\sim
&\smat{
\bar x_0&0&0&0\\
w&0&0&0\\
0&-x_2&0&\bar x_0\\
0&x_1&-\bar x_0&0\\
1&0&x_2&-x_1
}+
t\cdot
\smat{
0&x_1y_0+x_2z_0&x_1y_1+x_2z_1&x_1y_2+x_2z_2\\
0&q_0&q_1&q_2\\
0&y_0&y_1&y_2\\
0&z_0&z_1&z_2\\
0&0&0&0
}\\
\sim
&\smat{
t^{-1}\bar x_0&0&0&0\\
t^{-1}w&0&0&0\\
0&-x_2&0&\bar x_0\\
0&x_1&-\bar x_0&0\\
1&0&x_2&-x_1
}+
\smat{
0&x_1y_0+x_2z_0&x_1y_1+x_2z_1&x_1y_2+x_2z_2\\
0&q_0&q_1&q_2\\
0&ty_0&ty_1&ty_2\\
0&tz_0&tz_1&tz_2\\
0&0&0&0
}\\
\sim
&\smat{
\bar x_0&0&0&0\\
w&0&0&0\\
0&-x_2&0&\bar x_0\\
0&x_1&-\bar x_0&0\\
t&0&x_2&-x_1
}+
\smat{
0&x_1y_0+x_2z_0&x_1y_1+x_2z_1&x_1y_2+x_2z_2\\
0&q_0&q_1&q_2\\
0&ty_0&ty_1&ty_2\\
0&tz_0&tz_1&tz_2\\
0&0&0&0
},
\end{align*}
which concludes the proof.
\end{proof}

Evaluating~\refeq{eq: deformation at B' new} at $t=0$ gives
\[
\pmat{
\bar x_0&x_1y_0+x_2z_0&x_1y_1+x_2z_1&x_1y_2+x_2z_2\\
w&q_0&q_1&q_2\\
0&-x_2&0&\bar x_0\\
0&x_1&-\bar x_0&0\\
0&0&x_2&-x_1
}.
\]
\begin{lemma}\label{lemma: res0new}
The isomorphism class of the  cokernel $\cal F$ of
\[
2\O_{\P_2}(-3)\oplus 3\O_{\P_2}(-2)
\xra{
\pmat{
\bar x_0&p_0&p_1&p_2\\
w&q_0&q_1&q_2\\
0&-x_2&0&\bar x_0\\
0&x_1&-\bar x_0&0\\
0&0&x_2&-x_1
}
}
\O_{\P_2}(-2)\oplus 3\O_{\P_2}(-1)
\]
is a sheaf from $M_1$ with resolution
\begin{equation}\label{eq: res1new}
0\ra 2\O_{\P_2}(-3)\xra{\smat{\bar x_0&g\\w&h}} \O_{\P_2}(-2)\oplus \O_{\P_2}\ra\cal F\ra 0,
\end{equation}
if $\bar x_0h-wg\neq 0$ for $g=x_0p_0+x_1p_1+x_2p_2$, $h=x_0q_0+x_1q_1+x_2q_2$.
\end{lemma}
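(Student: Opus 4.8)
The plan is to collapse the Koszul block occupying the lower-right $3\times 3$ corner of the presentation matrix, thereby turning the given $5\times 4$ presentation of $\cal F$ into the $2\times 2$ presentation~\refeq{eq: res1new}, and then to recognise the latter as a presentation of type~\refeq{eq: res1} and invoke the description of $M_1$ from Section~\ref{section: two strata}.

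First I would split the $5\times 4$ matrix into blocks $\smat{r&P\\ 0&K}$, with $r=\smat{\bar x_0\\ w}\colon 2\O_{\P_2}(-3)\to\O_{\P_2}(-2)$, $P=\smat{p_0&p_1&p_2\\ q_0&q_1&q_2}\colon 2\O_{\P_2}(-3)\to 3\O_{\P_2}(-1)$, and $K=\smat{-x_2&0&\bar x_0\\ x_1&-\bar x_0&0\\ 0&x_2&-x_1}\colon 3\O_{\P_2}(-2)\to 3\O_{\P_2}(-1)$. The point I would exploit is that $K$, up to reordering of basis vectors, is the middle Koszul differential of the linearly independent linear forms $\bar x_0,x_1,x_2$; hence $\det K=0$ and
\[
3\O_{\P_2}(-2)\xra{\ K\ }3\O_{\P_2}(-1)\xra{\ (\bar x_0,x_1,x_2)^{\transpose}\ }\O_{\P_2}\ra 0
\]
is exact, so that $\Coker K\iso\O_{\P_2}$ via contraction with $(\bar x_0,x_1,x_2)$. (As a byproduct the presentation map acquires a kernel $\iso\O_{\P_2}(-3)$, which is exactly what makes $\cal F$ have Hilbert polynomial $4m-1$.)

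Next I would pass to the cokernel along the surjection $\Pi\colon\O_{\P_2}(-2)\oplus 3\O_{\P_2}(-1)\to\O_{\P_2}(-2)\oplus\O_{\P_2}$ that is the identity on the first summand and the contraction with $(\bar x_0,x_1,x_2)$ on the second. Its kernel, $0\oplus\Im K$, equals the image of the $3\O_{\P_2}(-2)$ summand under the presentation map, so $\Pi$ induces an isomorphism from $\cal F$ onto the cokernel of the composite $\smat{r&P\\ 0&K}\cdot\Pi$. Computing this composite, the bottom three rows become zero (because $K\cdot(\bar x_0,x_1,x_2)^{\transpose}=0$), while the top two become $(\bar x_0,g)$ and $(w,h)$, where $g$ and $h$ are the contractions of the two rows of $P$ against $(\bar x_0,x_1,x_2)$, i.e.\ the forms of~\refeq{eq: res1new}; dropping the now-vanishing $3\O_{\P_2}(-2)$ summand, which does not affect the cokernel, yields~\refeq{eq: res1new}. (Alternatively, this step can be packaged as a commutative diagram with exact rows and columns containing the Koszul sequence above as a column, in the style of Section~\ref{section: two strata}.)

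Finally, \refeq{eq: res1new} is of the shape~\refeq{eq: res1} with $z_1=\bar x_0$ and $z_2=w$: the matrix $\smat{\bar x_0&g\\ w&h}$ is injective precisely when its determinant $\bar x_0 h-wg$ is a non-zero quartic, and the two linear forms $\bar x_0$, $w$ are linearly independent whenever $w\neq 0$ (which is the case in our situation, where $w=\gamma x_1+\delta x_2$). Under these conditions the description of $M_1$ in Section~\ref{section: two strata} identifies $\cal F$ as a stable sheaf of $M_1$, the one attached to the point $p=Z(\bar x_0,w)$ on the quartic $C=Z(\bar x_0 h-wg)$. I expect the reduction of the $5\times 4$ matrix to be routine once the Koszul block is recognised; the point needing a little care is the last one, the linear independence of $\bar x_0$ and $w$ — without it the cokernel degenerates to an extension $0\to\O_{C'}\to\cal F\to\O_L(-2)\to 0$ and loses stability — so I would record that hypothesis explicitly, noting it is automatic wherever the Lemma is used.
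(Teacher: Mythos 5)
Your argument is correct and is essentially the paper's own proof read in the opposite direction: both hinge on recognising the lower-right $3\times 3$ block as the middle Koszul differential and on the contraction $\O_{\P_2}(-2)\oplus 3\O_{\P_2}(-1)\to\O_{\P_2}(-2)\oplus\O_{\P_2}$, the paper starting from the resolution \refeq{eq: res1new} and identifying the kernel of the composite surjection with the image of the $5\times 4$ matrix, while you reduce the $5\times 4$ presentation to the $2\times 2$ one. One point in your favour: the contraction must indeed be taken against $(\bar x_0,x_1,x_2)$, exactly as you do (so that $g=\bar x_0p_0+x_1p_1+x_2p_2$ and $h=\bar x_0q_0+x_1q_1+x_2q_2$), since only this vector annihilates the Koszul block; the occurrences of $x_0$ in the Lemma's statement and in the displayed contraction matrix of the paper's proof are evidently slips for $\bar x_0$, as the formula for the quartic $f$ in the subsequent application confirms.
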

\begin{proof}
Consider  the isomorphism class of  $\cal F$ with resolution~\refeq{eq: res1new}.
Then, using the Koszul resolution of $\O_{\P_2}$,
one concludes that the kernel of the composition of two surjective morphisms
\[
\O_{\P_2}(-2)\oplus 3\O_{\P_2}(-1) \xra{\smat{1&0\\0&x_0\\0&x_1\\0&x_2}}
\O_{\P_2}(-2)\oplus \O_{\P_2}\ra \cal F
\]
coincides with the image of
\[
2\O_{\P_2}(-3)\oplus 3\O_{\P_2}(-2)
\xra{
\smat{
\bar x_0&p_0&p_1&p_2\\
w&q_0&q_1&q_2\\
0&-x_2&0&\bar x_0\\
0&x_1&-\bar x_0&0\\
0&0&x_2&-x_1
}
}
\O_{\P_2}(-2)\oplus 3\O_{\P_2}(-1),
\]
which concludes the proof.
\end{proof}

For $A+tB$ with $A$ and $B$ as in~\refeq{eq: blow up coordinates} we obtain the morphism
\[
2\O_{\P_2}(-3)\oplus 3\O_{\P_2}(-2)
\xra{
\pmat{
\bar x_0&0&cx_1^2+ax_1x_2+bx_2^2&dx_2^2\\
w&q_0&q_1&q_2\\
0&-x_2&tcx_1&\bar x_0\\
0&x_1&-\bar x_0+t(ax_1+bx_2)&tdx_2\\
t&0&x_2&-x_1
}
}
\O_{\P_2}(-2)\oplus 3\O_{\P_2}(-1),
\]
which defines by Lemma~\ref{lemma: res0new} a family of $(4m-1)$-sheaves on $\tilde{\bb B}(t)$ and therefore a morphism $\tilde{\bb B}(t)\ra M$. This morphism sends the point of the exceptional divisor represented by $(A, 0, B)$ to the point given by the quartic curve $C=Z(f)$,
\[
f=\bar x_0\cdot (\bar x_0q_0(x_0, x_1, x_2)+x_1q_1(x_1, x_2)+x_2q_2(x_2))-w\cdot (cx_1^3+ax_1^2x_2+bx_1x_2^2+dx_2^3)
\]
and the point $p=Z(\bar x_0, w)$ on $C$.

\subsubsection{Gluing the morphisms $\tilde{\bb B}(t)\ra M$}
For different charts $\tilde{\bb B}(t)$ and $\tilde{\bb B}(t')$ the corresponding morphisms agree on intersections.  Therefore we conclude that there exists a morphism $\tilde{\bb B}\ra M$. It is an isomorphism outside of $D$. As already mentioned in Lemma~\ref{lemma: divisor D}, the restriction of this morphism to $D$ gives a $\P_1$-bundle $D\ra M_1$.
\begin{lemma}
The map $\tilde{\bb B}\ra M$ is the blow-up $\Bl_{M_1}M\ra M$.
\end{lemma}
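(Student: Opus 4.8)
The plan is to recognise the morphism $\phi\colon\tilde{\bb B}\ra M$ built above (by gluing the maps $\tilde{\bb B}(t)\ra M$) as the blow-down $\Bl_{M_1}M\ra M$, in two steps: first factor $\phi$ through $\Bl_{M_1}M$ using the universal property of blowing up, then show the resulting comparison map is an isomorphism. \textbf{Step 1 (factorisation).} Let $\cal I_{M_1}\subset\O_M$ be the ideal sheaf of the closed stratum. I would check that the preimage ideal sheaf $\phi^{-1}\cal I_{M_1}\cdot\O_{\tilde{\bb B}}$ is invertible, in fact equal to $\O_{\tilde{\bb B}}(-D)$. This is local on $\tilde{\bb B}$ and can be settled on the charts $\tilde{\bb B}(t)$ of Section~\ref{subsection: local charts of the blow-up}, where $\phi$ is given by the explicit $5\times 4$ matrix built from $A+tB$ with $A,B$ as in~\refeq{eq: blow up coordinates}: its cokernel lies in $M_1$ over $D=\{t=0\}$ by Lemma~\ref{lemma: res0new} and in $M_0$ over $\{t\ne 0\}$, and one reads off that the local equations cutting out $M_1$ in $M$ pull back, up to units, to $t$; equivalently, the conormal map $(\phi|_D)^{*}\cal N^{\vee}_{M_1/M}\ra\cal N^{\vee}_{D/\tilde{\bb B}}$ is surjective, i.e.\ the constructed family meets the closed stratum transversally. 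Granting this, the universal property of the blow-up yields a unique factorisation $\phi\colon\tilde{\bb B}\xra{\psi}\Bl_{M_1}M\ra M$, and, since $\phi^{-1}(M_1)=D$ set-theoretically, $\psi^{-1}(\tilde D)=D$ where $\tilde D\subset\Bl_{M_1}M$ is the exceptional divisor.

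\textbf{Step 2 (the comparison map is an isomorphism).} The morphism $\psi$ is proper (as $\tilde{\bb B}$ is projective) and birational, between the smooth projective varieties $\tilde{\bb B}$ and $\Bl_{M_1}M$ — the latter is smooth because $M$ and $M_1$ are smooth. Since $\phi$ and the blow-down $\Bl_{M_1}M\ra M$ are both isomorphisms over $M\setminus M_1$, so is $\psi$; hence $\psi$ is surjective, $\psi^{-1}(\tilde D)=D$, and $\psi(D)=\tilde D$. As $M_1$ has codimension $2$ in $M$, the divisor $\tilde D=\P(\cal N_{M_1/M})$ is a $\P_1$-bundle over $M_1$, so $\dim\tilde D=16=\dim D$. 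The exceptional locus of $\psi$ — the smallest closed set outside of which $\psi$ is an isomorphism onto its image — is contained in $D$, and, being the exceptional locus of a birational morphism onto a regular variety, is of pure codimension one in $\tilde{\bb B}$; as $D$ is irreducible (a $\P_1$-bundle over the irreducible $M_1$), it is empty or all of $D$. If it were $D$, every fibre of $\psi|_D\colon D\ra\tilde D$ would be positive-dimensional, forcing $\dim D\ge\dim\tilde D+1$, which is impossible. Hence this locus is empty, $\psi$ is an isomorphism (Zariski's Main Theorem), and $\phi$ is the blow-up $\Bl_{M_1}M\ra M$.

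The main obstacle is Step 1: one has to pin down, on the explicit local models, that the scheme-theoretic preimage $\phi^{-1}(M_1)$ is exactly the reduced exceptional divisor $D$, equivalently that the family of $(4m-1)$-sheaves constructed in Section~\ref{section: proof} degenerates transversally to the closed stratum. Once this invertibility is in hand, Step 2 is purely formal.
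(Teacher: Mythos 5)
Your proposal follows essentially the same route as the paper: factor $\phi$ through $\Bl_{M_1}M$ by the universal property of blow-ups and then show the comparison map is an isomorphism by general birational geometry (the paper gets there via surjectivity plus Zariski's main theorem giving finite connected fibres between smooth varieties; you use purity of the exceptional locus and a dimension count — both work). You are in fact more explicit than the paper about the one substantive hypothesis, namely that $\phi^{-1}\cal I_{M_1}\cdot \O_{\tilde{\bb B}}$ is invertible and equal to $\O_{\tilde{\bb B}}(-D)$, which the paper leaves implicit; your plan to verify it on the charts $\tilde{\bb B}(t)$ is the right way to close that point.
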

\begin{proof}
By the universal property of blow-ups, there exists a unique morphism
$\tilde{\bb B}\xra{\phi} \Bl_{M_1}M$ over $M$, which maps $D$ to the exceptional divisor $E$  of $\Bl_{M_1}M$ and is an isomorphism outside of $D$. This morphism must be surjective as its image is irreducible and contains an open set. Therefore, the fibres of $\phi$ over $E$ must be zero-dimensional and connected by the Zariski main theorem. This means that $\phi$ is a bijective morphism of smooth varieties and hence an isomorphism.
\end{proof}
This concludes the proof of Theorem~\ref{tr: main result}.

\def\cprime{$'$} \def\cprime{$'$} \def\cprime{$'$}

\end{document}